\crefname{equation}{}{}
\Crefname{equation}{}{}
\newtheorem{theorem}{Theorem}[section]
\newtheorem*{theorem*}{Theorem}
\newtheorem{proposition}{Proposition}[section]
\newtheorem*{proposition*}{Proposition}
\newtheorem{corollary}{Corollary}[section]
\newtheorem*{corollary*}{Corollary}
\newtheorem{lemma}{Lemma}[section]
\newtheorem*{lemma*}{Lemma}
\newtheorem{definition}{Definition}[section]
\newtheorem*{definition*}{Definition}
\newcommand{\R}{\mathbb{R}}
\newcommand{\K}{\mathcal{K}}
\newcommand{\Pk}{\mathcal{P}_{\K_k}}
\newcommand{\Pks}{\mathcal{P}_{\K_k}^\Omega}
\newcommand{\kmax}{k_{\text{max}}}
\newcommand{\comment}[1]{}
\newcommand{\undemi}{\frac{1}{2}}
\newcommand{\Span}[1]{\mathrm{Span} \langle #1 \rangle}
\DeclareMathOperator*{\argmin}{arg\,min}
\title{Randomized Orthogonal Projection Methods for Krylov Subspace Solvers}
\author{Edouard Timsit\thanks{Sorbonne Université, Inria, CNRS, Université de Paris, Laboratoire Jacques-Louis Lions, Paris, France}, Laura Grigori\footnotemark[1], Oleg Balabanov\thanks{Part of this work was conducted while the author was at Sorbonne Université, Inria, CNRS, Université de Paris, Laboratoire Jacques-Louis Lions, Paris, France}}
\date{March 10th, 2023}
\begin{document}

\maketitle

\comment{\tableofcontents}

\textbf{Abstract.} 
Randomized orthogonal projection methods  (ROPMs) can be used to speed up the computation of Krylov subspace methods in various contexts. Through a theoretical and numerical investigation, we establish that these methods produce quasi-optimal approximations over the Krylov subspace. Our numerical experiments outline the convergence of ROPMs for all matrices in our test set, with occasional spikes, but overall with a convergence rate similar to that of standard OPMs.

\section{Introduction}
Solving systems of linear equations of large dimension arise in a variety of applications. Given such a system of linear equations 
\begin{align}\label{baseequation}
A x = b,
\end{align}
where $A \in \mathbb{R}^{n \times n}$, $b \in \mathbb{R}^n$, and given a \textit{first guess} $x_0 \in \mathbb{R}^n$, Krylov solvers build the subspace 
\begin{align}\label{krylovsubspace}
\mathcal{K}_k(A,r_0) := \text{Span} \{ r_0, A r_0 , \, \cdots \, , A^{k-1} r_0 \},
\end{align}
where $k \ll n$ and $r_0 = b - A x_0$, and search for an approximation $\breve{x}_k \in \K_k(A,r_0)$ of the solution of~\cref{baseequation}. For example, this approximation can be obtained by minimizing the residual norm $\|b - A \breve{x}_k\|$ or the $A$-norm of the error $\|x - \breve{x}_k\|_A$ when $A$ symmetric positive definite. In the Krylov subspace mathematical setting, this minimization is equivalently formulated in terms of orthogonality between the residual and some $k$-dimensional subspace. This orthogonality condition is called the Petrov-Galerkin condition. Since the canonical basis of the Krylov subspace is expected to be ill-conditioned, using a well conditioned basis of this subspace lies at the heart of Krylov methods. For this reason, Krylov solvers usually proceed with a subsequent orthogonalization of the basis of $\K_k(A,r_0)$.

However most orthogonalization processes of $k$ vectors of $\R^n$ have a $\mathcal{O}(n k^2)$ cost in terms of arithmetic operations. This cost is likely to dominate and define the overall cost of the Krylov solver. It can be mitigated through the use of a dimension reduction technique called \textit{random sketching}. Given a $k$-dimensional vector subspace $\mathcal{V}_k$ of $\R^n$, $k \ll n$, this technique embeds $\mathcal{V}_k$ into $\R^\ell$, $k < \ell \ll n$, through a linear mapping $\Omega \in \R^{\ell \times n}$. The number of rows of $\Omega$ is called the \textit{sampling size}. One of the features of this linear mapping is that it preserves the inner products between pairs of vectors of $\mathcal{V}_k$,
up to a certain tolerance $\epsilon$, with high probability. The probability of success, as well as the $\epsilon$ we wish to attain, prescribe the sampling size $\ell$. The inner products of two vectors $x, y \in \mathcal{V}_k$ are then performed on their low-dimensional representations $\Omega x, \Omega y$, called their sketches, at a lower computational cost. The result is an approximation of the inner products between $x$ and $y$. The orthonormalization of $k$ vectors of $\R^n$ vectors is then replaced by an orthonormalization of their $k$ sketches in $\R^\ell$ (\textit{sketched orthonormalization}) for a lesser cost. It is then natural to investigate the use of sketched orthonormalization instead of the traditional orthogonalization process inside a Krylov solver.

In the context of Krylov methods, the random sketching technique was first used in~\cite{rgs}, where the authors developed a randomized version of the Gram-Schmidt (RGS) algorithm and applied it to the Arnoldi iteration and GMRES methods. It has been shown that the RGS algorithm is more efficient than traditional Gram-Schmidt algorithms, such as the modified and double Gram-Schmidt processes, in terms of arithmetic and communication cost, while being as stable as these methods. The methodology from~\cite{rgs} was later extended to block versions of RGS (BRGS) and the corresponding randomized Arnoldi and GMRES algorithms~\cite{BRGS}. In addition, the authors in~\cite{BRGS} suggested applying their methodology to other well-known Krylov methods, including the Rayleigh-Ritz approximation for eigenproblems and the FOM for linear systems (as described in \cite[Remark4.1]{BRGS}), by using a randomized (or sketched) version of the Petrov-Galerkin condition. The technical details of this approach are presented in the next section. This idea provides the foundation for the present work. It is worth noting that the concept of the sketched Petrov-Galerkin condition was first introduced in~\cite{Galerkin}, where it was applied to compute approximate solutions of linear parametric systems in a low-dimensional subspace. The main difference between present work and that in~\cite{Galerkin} is that the latter considers general subspaces constructed using methods such as greedy algorithms, whereas the present work focuses on Krylov subspaces, which appear to be better suited for random sketching and its analysis. 

Recently, randomization has been considered in~\cite{alice,guttel} to speed up the computation of the matrix function $f(A)b$ (where linear systems are a special case with $f(A) = A^{-1}$). While~\cite{alice} relies on partial orthogonalization methods and RGS,~\cite{guttel}~uses the sketched Petrov-Galerkin condition (or RFOM condition). Compared to~\cite{guttel}, the present work provides a more complete characterization of the accuracy of RFOM for linear systems in terms of both theoretical analysis and numerical experiments. It is important to note that although our work is concerned with solution of linear systems, the developed here results have the potential to characterize RFOM applied not only to the approximation of $A^{-1}b$, but also to general matrix functions due to the close connection between RFOM for matrix functions and the approximation of a series of shifted linear systems with sketched Petrov-Galerkin projections onto the Krylov subspace, as discussed in~\cite{guttel}. The sketched Petrov-Galerkin condition has also been used in~\cite{fastrandalg}, but only for the approximate solution of eigenvalue problems. Linear systems in~\cite{fastrandalg} were solved using a sketched minimal-residual condition similar to that in~\cite{rgs}. It is important to note that the methods in~\cite{guttel,fastrandalg,rgs,BRGS} are all mathematically equivalent in the sense that they either minimize the sketched residual or impose the sketched Petrov-Galerkin condition. The main difference between~\cite{guttel,fastrandalg} and~\cite{rgs,BRGS}  effectively lies in the numerical construction (possibly implicitly) of the sketched orthogonal Krylov basis  that satisfies the Arnoldi identity. The algorithms from~\cite{rgs,BRGS} are based on a monolithic construction using a randomized Arnoldi algorithm, while the methods in~\cite{guttel,fastrandalg} use a fast (but less stable) deterministic algorithm, such as the truncated Arnoldi, augmented with the randomized Cholesky QR (or more precisely the reduced randomized Cholesky QR depicted in~\cite{rcholqr}).

Through a theoretical and numerical investigation, this paper presents a comprehensive study of randomized orthogonal projection methods (ROPMs), a class of algorithms that includes RFOM, establishing $x_k$ as an overall tight quasi-optimal approximation of the solution of \cref{baseequation}. We introduce bounds characterizing the estimate produced by ROPMs as quasi-optimal minimizer of the $A$-norm of the error (energy norm) when $A$ is symmetric positive definite (SPD). Numerical investigation outlines the convergence of ROPMs for all matrices in our test set, with a convergence rate similar to that of standard OPMs. We also highlight occasional spikes of the $A$-norm of the error $\|x - x_k\|_A$, that can be several orders of magnitude higher than that of the error of standard OPMs $\|x - \breve{x}_k\|_A$. 
The estimate $x_k$, obtained by enforcing the sketched Petrov-Galerkin condition, can be straightforwardly characterized as a quasi-optimal minimizer of the $A$-norm of the error when the embedding $\Omega$ is successful, 
\begin{align}\label{quasi1intro}
	 \|x - x_k \|_A \leq \frac{1 + \epsilon \, \mathrm{cond}(A)^\frac{1}{2}}{1 - \epsilon \, \mathrm{cond}(A)^\frac{1}{2}} \| x - \breve{x}_k \|_A.
\end{align}
However, when the condition number of $A$ is large, this bound is not tight, and requires a large sampling size to be well-defined. Indeed, the inequality $\epsilon < \mathrm{cond}(A)^{-\undemi}$ needs to be satisfied for the denominator $1 - \epsilon \mathrm{cond}(A)$ to be positive. Furthermore, it does not accurately describe the subtle behavior of the solver, which in our numerical experiments follows tightly the monotonous error of the deterministic method but can feature occasional spikes of several orders of magnitude. Given this observation, we derive a second bound that accurately describes the behavior of the solver observed in our numerical experiments,
\begin{align}\label{quasi2intro}
\| x - x_k\|_A \leq \left (1+\alpha^2_k \beta^2_k \right)^{\frac{1}{2}} \| x - \breve{x}_k\|_A, 
\end{align}
where the factor involving $\alpha_k$ and $\beta_k$ is expected to capture the spikes, and be of the order of $1$ otherwise. This might allow to foresee the simple conditions under which the method will work, and potentially avoid those spikes ($\alpha_k, \beta_k$ are defined in~\Cref{section:quasi}).

In addition, we propose an a posteriori bound linking the residual produced by ROPM to that produced by the deterministic equivalent. We also discuss the usage of randomization for short recurrence algorithms, and the difficulty of benefiting from the symmetry of $A$ with randomization, making the prospect of randomizing MinRES and CG more complex than the randomization of GMRES developed in~\cite{rgs}.  

We finally propose a straightforward approximation through random sketching of CG algorithm, with a posteriori error estimation based on the methodology developed in~\cite{whycg}. We identify two types of spectral profiles for which this algorithm can be used, saving one communication per iteration in a distributed environment. However in general this algorithm cannot be expected to be stable.  

The paper is organized as follows. In~\Cref{section:preliminaries} we introduce in more details the notions that are used in our work. In~\Cref{section:quasi} we prove and discuss the characterizations of $x_k$ from \cref{quasi1intro,quasi2intro} mentioned in this introduction. In~\Cref{section:rarnoldi} we prove an indirect bound linking residuals produced by both randomized and deterministic projection methods. In~\Cref{section:symmetry}, we discuss the difficulties caused by random sketching as to sketched orthonormalizing a basis of the Krylov subspace through short-recurrence. In~\Cref{section:arCG} we introduce the approximation of CG algorithm through randomization, that can be useful in some applications, and derive a posteriori error bound for the $A$-norm of the error. Finally, in~\Cref{section:results}, we report experiments that showcase our theoretical results. 

\section{Preliminaries} \label{section:preliminaries}
\subsection{Notations} \label{section:preliminaries_notations}

We use standard linear algebra notation, such as in~\cite{saad}, with varying accents depending on which algorithm is discussed. Because they are at the center of the discussion, the byproducts of randomized methods are given the simplest notation without accents. For instance, in iterative randomized algorithms the $k$-th estimate of $x$ and the $k$-th residual are denoted by $x_k$ and $r_k$, respectively. On the other hand, byproducts of deterministic algorithms are singled out by a \textit{breve} accent. For instance, the $k$-th estimate and the $k$-th residual produced by a deterministic algorithm are denoted by $\breve{x}_k$ and $\breve{r}_k$. The $\ell_2$ norm is denoted without subscript ($\langle x , x \rangle = \|x\|^2$). The $A$-norm is denoted with subscript ($\langle A x, x \rangle = \|x\|_A^2$).

\subsection{Standard Krylov methods}

In the following, if no confusion is possible, we will simply denote $\mathcal{K}_k(A,r_0)$ by $\mathcal{K}_k$.
A \textit{Krylov solver} is rigorously characterized by the fulfilment of two conditions:
\begin{enumerate}
	\item For each $1 \leq k \leq n$, the estimate $\breve{x}_k$ of $x$ belongs to $x_0+\K_k$. \textit{(subspace-condition)}
	\item For each $1 \leq k \leq n$, the residual $\breve{r}_k = b - A \breve{x}_k$ is $\ell_2$-orthogonal to some $k$-dimensional subspace $\mathcal{L}_k$. \textit{(Petrov-Galerkin condition)}
\end{enumerate}
Two important examples of Krylov solvers are given by $\mathcal{L}_k = \K_k$, such as FOM and the CG algorithm, and $\mathcal{L}_k = A \K_k$, such as the GMRES algorithm (see~\cite{saad}). Iterative algorithms fulfiling the Petrov-Galerkin condition with $\mathcal{L}_k = \K_k$ are called orthogonal projection methods (O.P.M)~\cite[Proposition 5.2]{saad}. If $A$ is an SPD operator, it can be shown that
\begin{align} \label{pgc}
    \breve{r}_k \perp \K_k \iff \breve{x}_k = \argmin_{y \in \K_k} \| x - y \|_A^2
\end{align}
Iterative algorithms fulfiling the Petrov-Galerkin condition with $\mathcal{L}_k = A \K_k$ are a particular case of oblique projection methods~\cite[Proposition 5.3]{saad}. It can be shown that:
\begin{align*}
\breve{r}_k \perp A \K_k \iff \breve{x}_k = \argmin_{y \in \K_k} \| b - A y \|^2
\end{align*}

\Cref{algo:detarnoldi} qualifies as an OPM. To orthogonalize the residual $\breve{r}_k$ with respect to $\K_k$ at the $k$-th iteration, the whole basis $\breve{v}_1, \cdots, \breve{v}_k$ of $K_k$ is used (and so must be stored), which is why it is called full-orthogonalization method (FOM). The Petrov-Galerkin condition at iteration $j$ is enforced by solving a small reduced upper-Hessenberg system of dimension $j \times j$. Clearly, at each iteration $j$, the columns of $\breve{V}_{j}$ form an orthonormal basis for $\K_{j}$. Having the following relation for all $j \leq k$,
\begin{align*}
    \breve{h}_{j+1,j} \breve{v}_{j+1} = A \breve{v}_j - \breve{h}_{1,j} \breve{v}_1 - \cdots - \breve{h}_{j,j} \breve{v}_j    
\end{align*}
we get the Arnoldi relation and the Hessenberg-matrix formula
\begin{align}\label{projectedA}
    \begin{cases}
    A \breve{V}_k = \breve{V}_{k+1} \breve{H}_{k+1,k}\\
    \breve{V}_k^t A \breve{V}_k = \breve{H}_k.
    \end{cases}
\end{align}
which in turn can be used to show that~\Cref{algo:detarnoldi} gives $\breve{x}_k$ that satisfies the Petrov-Galerkin orthogonality condition $\breve{r}_k \perp \K_k$.

\begin{algorithm}
	\caption{Full Orthogonalization Method, a.k.a Arnoldi solver} \label{algo:detarnoldi}
	\KwInput{$A \in \mathbb{R}^{n\times n}, b\in \mathbb{R}^n, x_0 \in \mathbb{R}^n$}
	\KwOutput{$\breve{x}_k$, $\breve{V}_k$ an orthonormal basis of $\K_k$, $\breve{H}_k$ the matrix of the orthonormalization coefficients.}
	\SetKwComment{Comment}{/* }{ */}
	$r_0 \gets A x_0 - b$, $\beta \gets \|r_0\|$, $\breve{v}_1 \gets \beta^{-1} r_0$ \\
	\For{$j = 1, \hdots, k$} {
		$z \gets A \breve{v}_j$ \\
		\For{$i = 1, \hdots, j$} {
			$\breve{h}_{i, j} \gets \langle v_i, z\rangle$ \\
			$z = z - \breve{h}_{i, j} v_i$
		}
		$\breve{h}_{j+1,j} \gets \| z \|$ \\
		$\breve{v}_{j+1} \gets \breve{h}_{j+1,j}^{-1} z$ } 
	$\rho_k \gets \breve{H}_k^{-1} (\beta e_k) $ \\
	$\breve{x}_k \gets x_0 + \breve{V}_k \rho_k$\\
	Return $\breve{x}_k$, $\breve{V}_k$ and $\breve{H}_k$
\end{algorithm}

In~\Cref{algo:detarnoldi}, for orthonormalization of the basis we traditionally rely on the modified Gram-Schmidt (MGS) process. Alternatively, this task can also be performed with other stable methods such as the classical Gram-Schmidt process with re-orthogonalization (CGS2). While CGS2 requires twice as many flops as MGS, it can be more efficient from a performance standpoint.  Other techniques include Givens rotations and Householder transforms.

If $A$ is symmetric and non-singular, which is a common situation in applications, the relation~\cref{projectedA} implies that $\breve{H}_k$ is symmetric as well. The symmetry of $\breve{H}_k$, coupled with its upper-Hessenberg structure, implies that every element above the super-diagonal of this matrix is zero. Hence the matrix $\breve{H}_k$ is tridiagonal and symmetric. Recalling the definition of $\breve{H}_j$ at iteration $j$, this implies that for $i \leq j-2$, the inner products $\langle \breve{v}_i, z \rangle $ in line 5 of~\Cref{algo:detarnoldi} are all zero, i.e $A \breve{v}_j$ needs only to be orthogonalized with respect to $\breve{v}_j, \breve{v}_{j-1}$ Moreover, the coefficient $\langle A \breve{v}_j, \breve{v}_{j-1} \rangle = \breve{h}_{j,j-1}$ from the previous iteration needs not being computed again. Simplifying lines 4-6 of~\Cref{algo:detarnoldi} accordingly leads to the so-called Lanczos algorithm. The Lanczos algorithm requires only two inner products per iteration to perform the basis orthonormalization, which represents a significant cost reduction compared to~\Cref{algo:detarnoldi}. We stress out that the residual orthogonalization (the solving step), still requires all vectors of the basis to be stored. However, omitting the full orthogonalization against the previously computed vectors can come at the expense of poor accuracy under finite precision arithmetic. This phenomenon, known as loss of orthogonality, is described in C. Paige PhD thesis \cite{paige}, and is addressed by B.N Parlett and D.S Scott in~\cite{parlett}. Addressing this problem can significantly increase the computational cost of the method and make it nearly as expensive in terms of flops as FOM. 

The Lanczos algorithm can be improved by noticing that every residual $\breve{r}_k = b - A \breve{x}_k$ is linearly dependent to $\breve{v}_{k+1}$. Denoting the matrix of these successive residuals by $\breve{R}_k$, it follows that the matrix $\breve{R}_k^T A \breve{R}_k$ is also tridiagonal and symmetric. We can invoke a Crout factorization of this matrix, $\breve{R}_k^T A \breve{R}_k = \breve{L}_k \breve{D}_k \breve{L}_k^T$,
where $\breve{L}_k$ is lower triangular with only diagonal and sub-diagonal non-zeros, and $\breve{D}_k$ is diagonal. It can then be inferred that there exists an $A$-conjugate basis of $\K_k$ (namely, $\breve{L}_k^{-1} \breve{R}_k$) that satisfies a short recurrence relation with the residuals $\breve{r}_k$. This leads to the well-known CG given in~\Cref{algo:detcg}. 

\begin{algorithm}
	\caption{CG algorithm} \label{algo:detcg}
	\KwInput{$A \in \mathbb{R}^{n\times n}$ a SPD matrix, $b, x_0 \in \mathbb{R}^n$}
	\KwOutput{$\breve{x}_k \in \K_k$ such that $\breve{r}_k \perp \K_k$}
	$r_0 \gets A x_0 - b$, $p_0 = r_0$ \\
	\SetKwComment{Comment}{/* }{ */}
	\For{$j = 1, \hdots,  k$} {
		Compute $A \breve{p}_j$ \\
		$\breve{\gamma}_j \gets \frac{\langle \breve{r}_j, \breve{p}_j \rangle}{\langle A \breve{p}_j, \breve{p}_j \rangle}$ \\
		$\breve{x}_{j+1} \gets \breve{x}_j + \breve{\gamma}_j \breve{p}_j$ \\
		$\breve{r}_{j+1} \gets \breve{r}_j - \breve{\gamma}_j A \breve{p}_j$ \\
		$\breve{\delta}_{j+1} \gets - \frac{\langle \breve{r}_{j+1}, A \breve{p}_j \rangle}{\langle \breve{p}_j, A \breve{p}_j \rangle} \quad $  \\
		$\breve{p}_{j+1} \gets \breve{r}_{j+1} + \breve{\delta}_{j+1} \breve{p}_j$
	}
	
	Return $\breve{x}_k$ 
\end{algorithm}
Since the basis $\breve{p}_1, \hdots, \breve{p}_k$ is $A$-orthogonal, the restriction of $A$ to $\K_k$ in this basis is diagonal. This implies that once the contribution of one $\breve{p}_i$ to the estimate $\breve{x}_k$ is computed, it is no longer needed to refine the next estimate. For most of the applications, the CG algorithm is highly efficient because it only requires storage for three vectors and involves the computation of only three inner products per iteration, and has good numerical stability (see \cite{whycg}). For all these reasons, the CG algorithm is considered the \textit{nec plus ultra} among iterative methods. However it is still less stable than FOM, and as such the latter can be preferred for very ill-conditioned systems.

\subsection{Random sketching}

A matrix $\Omega \in \R^{ \ell \times n}$ is called an $\epsilon$-embedding of $\mathcal{V}_k$ if 
\begin{align}\label{subspaceembedding2}
\forall x \in \mathcal{V}_k, \, \, \left( 1 - \epsilon \right) \| x \|^2 \leq \| \Omega x \|^2 \leq \left(1 + \epsilon \right) \|x \|^2,
\end{align}
which, due to the parallelogram inequality, is equivalent to
\begin{align}\label{subspaceembedding1}
\forall x, y \in \mathcal{V}_k, \; \; | \langle \Omega x, \Omega y \rangle - \langle x, y \rangle | \leq \epsilon \|x\| \, \| y \|.
\end{align}
In this work, we will use $\Omega$ built with probabilistic techniques such that they have a high probability of being $\epsilon$-embeddings for arbitrary low-dimensional subspaces. These $\Omega$ are referred to as \textit{oblivious subspace embeddings} (OSEs) because they are generated without prior knowledge of the specific subspaces that they will embed.

\begin{definition}
	We say that $\Omega \in \R^{\ell \times n}$ is an oblivious subspace embedding with parameters $\epsilon, \delta, k$ if, given any $k$-dimensional subspace $\mathcal{V}_k \subset \R^n$, it satisfies~\cref{subspaceembedding1} with probability at least $1-\delta$.
\end{definition}
In the following, an oblivious subspace embedding with parameters $\epsilon, \delta, k$ will be simply called an $(\epsilon, \delta, k)$ OSE. There are several distributions that are guaranteed to satisfy the $(\epsilon, \delta, k)$ OSE property when the dimension $\ell$ is sufficiently large. Among them, we chose Gaussian matrices and SRHT as representatives. A Gaussian OSE has entries that are i.i.d. random variables drawn from $\mathcal{N}(0, \ell^{-1/2})$. It satisfies the $(\epsilon, \delta, k)$ OSE property if $\ell = \mathcal{O}(\epsilon^{-2} (k + \log \frac{1}{\delta}))$ (see proof in \cite{woodruff}). Sketching with such $\Omega$ can be beneficial in distributed or streamed computational environments where communication cost and passes over the data are of primary concern. However, in the classical sequential computational environment the benefit of sketching with this matrix can be mitigated. The SRHT OSE is defined as follows
\begin{align}
\Omega = \sqrt{\frac{n}{\ell}} P H D, \label{srht}
\end{align}
where $D \in \R^{n \times n}$ is a diagonal matrix of random signs, $H \in \R^{n \times n}$ is a Hadamard matrix (supposing that $n$ is a power of 2), and $P \in \R^{\ell \times n}$ is an uniform sampling matrix, i.e $\ell$ rows drawn from the identity matrix $I_n$. It is an ($\epsilon, \delta, k$) OSE if $\ell = \mathcal{O}(\epsilon^{-2} (k + \log \frac{n}{\delta}) \log\frac{k}{\delta})$, which is only slightly larger than the requirement for Gaussian matrices. It is worth noting that in both cases, the required sketching dimension is either independent of or only logarithmically dependent on $n$ and $\delta$. The advantage of SRHT is that it requires much fewer flops to compute $\Omega x$ than unstructured matrices. Specifically, with the standard implementation using the Walsh-Hadamard transform, it requires $O(n \log (n))$ flops, while with a more sophisticated implementation from~\cite{ailon}, it requires $O(n \log (\ell))$ flops. However, SRHT matrices are not as well-suited for distributed computing~\cite{yang}. In general, the choice of OSE should be based on the given computational architecture to achieve the most benefit.

Assume that $\Omega$ is an $\epsilon$-embedding for $\mathcal{V}_k$. We say that two vectors $v_1$ and $v_2$ from $\mathcal{V}_k$ are \textit{sketched orthogonal}, denoted by symbol $\perp^\Omega$,  if the sketches $\Omega v_1$ and $\Omega v_2$ are $\ell_2$-orthogonal. We say that $v_1, \cdots, v_k \in \mathcal{V}_k$ are a sketched orthonormal basis of $\mathcal{V}_k$ if and only if $\Omega v_1, \cdots, \Omega v_k$ are an orthonormal set of vectors of $\mathbb{R}^\ell$. Using the sketch-embedding property \Cref{subspaceembedding1}, it is clear that such a set of vector is linearly independant. They are also linearly independent from a numerical point of view. Indeed, denoting $V_k \in \R^{n\times k}$ the matrix which columns are formed by $v_1, \cdots , v_k$, we have the following result.
\begin{corollary}[Corollary 2.2 in~\cite{rgs}] If $\Omega$ is {an} $\epsilon$-embedding for $\mathcal{V}_k = \mathrm{range}(V_k)$, then the singular values of $V_k$ are bounded by $$ (1+\epsilon)^{-1/2} \sigma_{min}(\Omega V_k)  \leq \sigma_{min}(V_k) \leq \sigma_{max}(V_k) \leq  (1-\epsilon)^{-1/2} \sigma_{max}(\Omega V_k).$$
\end{corollary}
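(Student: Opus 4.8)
The plan is to reduce the claim to the Rayleigh-quotient (variational) characterization of the extreme singular values and then feed in the $\epsilon$-embedding inequality~\cref{subspaceembedding2} one vector at a time. Since both $\Omega V_k \in \R^{\ell \times k}$ and $V_k \in \R^{n \times k}$ have more rows than columns (as $k < \ell \ll n$), for every $y \in \R^k \setminus \{0\}$ one has $\sigma_{\min}(V_k) \le \|V_k y\|_2 / \|y\|_2 \le \sigma_{\max}(V_k)$, with both bounds attained, and likewise for $\Omega V_k$.

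First I would fix $y \in \R^k \setminus \{0\}$ and set $x = V_k y$, which lies in $\mathcal{V}_k = \mathrm{range}(V_k)$ by definition, so~\cref{subspaceembedding2} applies and yields, after taking square roots,
$$ \sqrt{1-\epsilon}\,\|V_k y\|_2 \;\le\; \|\Omega V_k y\|_2 \;\le\; \sqrt{1+\epsilon}\,\|V_k y\|_2 . $$
For the leftmost inequality of the statement, I divide the upper bound $\|\Omega V_k y\|_2 \le \sqrt{1+\epsilon}\,\|V_k y\|_2$ by $\|y\|_2$ and minimize over $y \ne 0$: the left-hand side has infimum $\sigma_{\min}(\Omega V_k)$ and the right-hand side has infimum $\sqrt{1+\epsilon}\,\sigma_{\min}(V_k)$, so $\sigma_{\min}(\Omega V_k) \le \sqrt{1+\epsilon}\,\sigma_{\min}(V_k)$, which rearranges to $(1+\epsilon)^{-1/2}\,\sigma_{\min}(\Omega V_k) \le \sigma_{\min}(V_k)$. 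For the rightmost inequality, I divide the lower bound $\sqrt{1-\epsilon}\,\|V_k y\|_2 \le \|\Omega V_k y\|_2$ by $\|y\|_2$ and maximize over $y \ne 0$, obtaining $\sqrt{1-\epsilon}\,\sigma_{\max}(V_k) \le \sigma_{\max}(\Omega V_k)$, i.e. $\sigma_{\max}(V_k) \le (1-\epsilon)^{-1/2}\,\sigma_{\max}(\Omega V_k)$. The middle inequality $\sigma_{\min}(V_k) \le \sigma_{\max}(V_k)$ is immediate.

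There is essentially no obstacle here; the only points worth a remark are that the variational characterization of $\sigma_{\min}$ requires $V_k$ and $\Omega V_k$ to be tall (which holds) and that the $\epsilon$-embedding hypothesis with $\epsilon < 1$ actually forces $V_k$ to have full column rank, so all the infima above are attained at genuine singular values. An equivalent, slightly more algebraic route would be to rewrite~\cref{subspaceembedding2} as the Loewner-order sandwich $(1-\epsilon)\,V_k^\top V_k \preceq V_k^\top \Omega^\top \Omega V_k \preceq (1+\epsilon)\,V_k^\top V_k$ and then compare eigenvalues of the two symmetric positive semidefinite matrices and take square roots, but the Rayleigh-quotient argument above is the most economical.
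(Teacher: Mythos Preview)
Your argument is correct and is the standard Rayleigh-quotient proof of this fact. The paper itself does not supply a proof; it merely quotes the result as Corollary~2.2 of~\cite{rgs}, so there is no in-paper argument to compare against.

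One small inaccuracy in your closing remarks: the $\epsilon$-embedding hypothesis concerns only $\mathrm{range}(V_k)$ and therefore cannot by itself force $V_k$ to have full column rank---if $V_k y = 0$ for some $y \ne 0$ then also $\Omega V_k y = 0$, both minimal singular values vanish, and the stated inequalities hold trivially. This does not affect the validity of your main argument.
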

As a consequence, the condition number of a matrix $V_k \in \R^{n \times k}$ whose columns are sketched orthonormal (i.e such that the columns of $\Omega V_k \in \R^{\ell \times k}$ are $\ell_2$-orthonormal) is less than $\sqrt{3}$ for the typical $\epsilon = \undemi$.
Given a sketched orthonormal basis $v_1, \cdots, v_k$ of $\mathcal{V}_k$, we say that $z \in \R^\ell$ is sketched orthogonal to $\mathcal{V}_k$ if and only if the vectors $\Omega v_1, \cdots, \Omega v_k, \Omega z$ are $\ell_2$-orthogonal (assume that $\Omega$ is an $\epsilon$-embedding of $\mathcal{V}_k + \Span{z}$), and we write $z \perp^\Omega \mathcal{V}_k$. Finally, we denote by $\mathcal{P}_{\mathcal{V}_k}^\Omega$ a \textit{sketched orthogonal projector} onto $\mathcal{V}_k$, i.e., a linear mapping that satisfies 
$ (z - \mathcal{P}_{\mathcal{V}_k}^\Omega z)  \perp^\Omega \mathcal{V}_k$
for all $z \in \mathcal{V}_k$ (again, assume that $\Omega$ is an $\epsilon$-embedding of $\mathcal{V}_k + \Span{z}$). It is worth noting that the projector $\mathcal{P}_{\mathcal{V}_k}^\Omega$ satisfies
\begin{equation} \label{eq:skproj}
\mathcal{P}_{\mathcal{V}_k}^\Omega z = \argmin_{x \in \mathcal{V}_k} \| \Omega \left(z - x\right) \|.
\end{equation}

This observation serves as the foundation of the \textit{randomized Gram-Schmidt} (RGS) algorithm developed in~\cite{rgs}. Given a matrix $V_k$ and a subspace embedding $\Omega$ of $\mathcal{V}_k = \mathrm{range}(V_k)$, this algorithm orthogonalizes the columns of $V_k$ in a manner similar to the standard Gram-Schmidt algorithm. The distinguishing feature is that this orthogonalization is performed with respect to the sketched inner product $\langle \Omega \cdot, \Omega \cdot \rangle$ rather than the $\ell_2$-inner product. In detail, if $V_{j}$ denotes a matrix whose columns are the first $j$ orthogonalized columns of $V_k$, the RGS computes the next vector $v_{j+1}$ such that $v_{j+1} \perp^\Omega \Span{V_{j}}$. This allows the projection step of the Gram-Schmidt process to be performed on sketches rather than high-dimensional vectors, thus saving computational cost while preserving numerical stability. In particular, it was shown in~\cite{rgs} that RGS requires nearly half as many flops as MGS and four times fewer flops than CGS2, while being just as stable as these algorithms. Additionally, RGS can have a lower communication cost and requires fewer data passes.

The authors in~\cite{rgs} integrated the RGS algorithm into the Arnoldi iteration to construct a sketched orthonormal Krylov basis and then applied it to the GMRES method. It was shown that using a sketched orthonormal basis in GMRES is equivalent to minimizing the sketched residual error $\|\Omega r_k\|$.  Due to the $\epsilon$-embedding property of $\Omega$, it was directly deduced that the solution $x_k$ is a quasi-optimal minimizer of the residual error.

\section{Randomized Orthogonal Projection Methods}
In this section we prove our main result~\Cref{prop:quasi2}. First we define a class of algorithms that we refer to as ROPMs. 
\begin{definition}\label{def:ropm}
Let $\kmax \leq n$ be a positive integer. Let $\Omega$ be an $\epsilon$-embedding of $\K_{\kmax}$. A \textit{randomized orthogonal-projection method} (ROPM) is an iterative method producing a sequence of estimates $(x_k)_{k \leq \kmax}$ such that:
\begin{enumerate}
    \item For all $1 \leq k \leq \kmax$, the estimate $x_k$ belongs to $x_0 + \K_k$ (\textit{subspace condition})
    \item  For all $1 \leq k \leq \kmax$, the residual $r_k$ verifies the \textit{sketched Petrov-Galerkin condition}:
    \begin{align} \label{spgc}
        r_k \perp^\Omega \K_k.
    \end{align}
\end{enumerate}
\end{definition}
We recall that, since the Krylov subspace sequence is increasing, an $\epsilon$-embedding of $\K_{\kmax}$ is also an $\epsilon$-embedding of $\K_k$ for all $k \leq \kmax$. The first bound, given in~\Cref{prop:quasi1}, is derived directly from the $\epsilon$-embedding property. The second bound, given in~\Cref{prop:quasi2} is much tighter and describes accurately the behavior of the solver. Finally, we consider a specific ROPM, \Cref{algo:randarnoldi}, and give an indirect bound linking residuals produced by deterministic and randomized Arnoldi. 

\subsection{Quasi-optimality conditions}\label{section:quasi}
For better presentation assume that $x_0 = 0$. The extension to the cases with a general initial guess vector is straightforward. Assuming that $A$ is positive-definite, the quasi-optimality of ROPM solution can be characterized with the following result.
\begin{proposition} \label{prop:quasi1}
	Let $\Omega \in \mathbb{R}^{ \ell \times n}$ be an $\epsilon$-embedding of $\K_{k+1} + \mathrm{span}(x)$, with $\epsilon \leq \mathrm{cond}(A)^{-\frac{1}{2}}$. Assume that $A$ is positive-definite. Then the estimate $x_k \in \K_k$ produced by ROPM defined in~\Cref{def:ropm}, and $\breve{x}_k \in \K_k$ produced by standard OPM satisfy
	\begin{align} \label{quasi1}
	\|x - x_k \|_A \leq \frac{1 + \epsilon \, \mathrm{cond}(A)^\frac{1}{2}}{1 - \epsilon \, \mathrm{cond}(A)^\frac{1}{2}} \| x - \breve{x}_k \|_A.
	\end{align}
\end{proposition}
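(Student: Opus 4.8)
The plan is to run the classical quasi-optimality argument for Galerkin projections, but with the exact $\ell_2$-inner product replaced by the sketched one and with an extra factor paid for the mismatch between $\ell_2$- and $A$-geometry. Work with $x_0=0$ as stipulated, so that $r_0=b$, $\K_k=\Span{b,Ab,\dots,A^{k-1}b}$, $x=A^{-1}b$, and $r_k=b-Ax_k=A(x-x_k)$. The structural fact driving everything is that $x_k\in\K_k$ forces $Ax_k\in\Span{Ab,\dots,A^kb}\subseteq\K_{k+1}$, hence $r_k\in\K_{k+1}$, and likewise $\breve r_k=A(x-\breve x_k)\in\K_{k+1}$. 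Consequently every vector on which I will invoke the $\epsilon$-embedding inequality \cref{subspaceembedding1} lies inside $\K_{k+1}$, which is exactly what the hypothesis provides.

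First I would fix an arbitrary $y\in\K_k$ and, using that $A$ is symmetric positive-definite, write
\[
\|x-x_k\|_A^2=\langle A(x-x_k),\,x-y\rangle+\langle A(x-x_k),\,y-x_k\rangle .
\]
The first summand is at most $\|x-x_k\|_A\,\|x-y\|_A$ by Cauchy--Schwarz for the $A$-inner product. For the second, $y-x_k\in\K_k$, so the sketched Petrov--Galerkin condition \cref{spgc} gives $\langle\Omega r_k,\Omega(y-x_k)\rangle=0$; therefore $\langle A(x-x_k),y-x_k\rangle=\langle r_k,y-x_k\rangle-\langle\Omega r_k,\Omega(y-x_k)\rangle$, and since both $r_k$ and $y-x_k$ lie in $\K_{k+1}$, \cref{subspaceembedding1} bounds this quantity by $\epsilon\,\|r_k\|_2\,\|y-x_k\|_2$.

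The next step trades these $\ell_2$-norms for $A$-norms, and this is where $\mathrm{cond}(A)$ enters: since $A$ is SPD, $\|r_k\|_2=\|A(x-x_k)\|_2\le\lambda_{\max}(A)^{1/2}\|x-x_k\|_A$ and $\|y-x_k\|_2\le\lambda_{\min}(A)^{-1/2}\|y-x_k\|_A$, so the cross term is at most $\epsilon\,\mathrm{cond}(A)^{1/2}\,\|x-x_k\|_A\,\|y-x_k\|_A$. Dividing through by $\|x-x_k\|_A$ (the bound being trivial otherwise) and then using $\|y-x_k\|_A\le\|x-y\|_A+\|x-x_k\|_A$ yields $(1-\epsilon\,\mathrm{cond}(A)^{1/2})\|x-x_k\|_A\le(1+\epsilon\,\mathrm{cond}(A)^{1/2})\|x-y\|_A$; the assumption $\epsilon\le\mathrm{cond}(A)^{-1/2}$ keeps the left-hand factor non-negative (one actually needs it strictly positive for the estimate to say anything, as noted in the introduction). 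Finally, for SPD $A$ the standard OPM produces $\breve x_k=\argmin_{y\in\K_k}\|x-y\|_A$ by \cref{pgc}, so specializing to $y=\breve x_k$ gives precisely \cref{quasi1}.

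I expect the only genuinely delicate point to be the bookkeeping that keeps $r_k$, $\breve r_k$ and the differences $y-x_k$ within $\K_{k+1}$ so that the embedding may be applied; the remainder is the textbook quasi-optimality computation together with the routine $\ell_2$/$A$-norm conversion that costs the $\mathrm{cond}(A)^{1/2}$. As an alternative one could split $x_k=\breve x_k+\delta$ with $\delta\in\K_k$, use the $A$-orthogonality of $x-\breve x_k$ to $\K_k$ to get $\|x-x_k\|_A^2=\|x-\breve x_k\|_A^2+\|\delta\|_A^2$, and bound $\|\delta\|_A$ by testing the sketched condition against $\delta$ itself; that route manipulates $x$ directly, which is presumably why the hypothesis embeds $\K_{k+1}+\mathrm{span}(x)$ and not merely $\K_{k+1}$.
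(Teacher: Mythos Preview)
Your argument is correct and is a mild variant of the paper's. The paper starts from $\|x-x_k\|_A^2=\langle x-x_k,A(x-x_k)\rangle$, passes to the sketched inner product via \cref{subspaceembedding1}, splits $x-x_k=(x-\breve x_k)+(\breve x_k-x_k)$ and kills the second piece with \cref{spgc}, then passes back to the true inner product (a second use of the embedding) and finishes with the same $\ell_2$/$A$-norm conversion you use. Because the paper applies the embedding to pairs such as $(x-x_k,\,A(x-x_k))$ and $(x-\breve x_k,\,A(x-x_k))$, the vectors $x-x_k$ and $x-\breve x_k$ must lie in the embedded subspace, which is why the hypothesis asks for an $\epsilon$-embedding of $\K_{k+1}+\mathrm{span}(x)$.

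Your route differs in that you split $\|x-x_k\|_A^2=\langle A(x-x_k),x-y\rangle+\langle r_k,y-x_k\rangle$, handle the first term by Cauchy--Schwarz in the $A$-inner product (no sketching needed), and invoke \cref{subspaceembedding1} only once, on the pair $(r_k,\,y-x_k)$, both of which lie in $\K_{k+1}$. As you yourself note at the end, this means your proof actually goes through under the weaker assumption that $\Omega$ is an $\epsilon$-embedding of $\K_{k+1}$ alone; the extra $\mathrm{span}(x)$ in the statement is an artifact of the paper's two-pass use of the embedding. Otherwise the two arguments coincide: same norm conversions $\|A(x-x_k)\|_2\le\lambda_{\max}^{1/2}\|x-x_k\|_A$ and $\|y-x_k\|_2\le\lambda_{\min}^{-1/2}\|y-x_k\|_A$, same triangle-inequality step, and the final specialization $y=\breve x_k$ (for which the optimality in \cref{pgc} is not even needed, just membership in $\K_k$).
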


\begin{proof}
	By the $\epsilon$-embedding property of $\Omega$ we get
	\begin{align*}
	\| x - x_k \|_A^2 & =  \langle   x - x_k, A(x - x_k)  \rangle  \leq |  \langle   \Omega (x - x_k), \Omega A(x-x_k)  \rangle  + \epsilon \, \| x - x_k \| \cdot \| A (x - x_k) \| \\
	& \leq |  \langle   \Omega (x - \breve{x}_k), \Omega A (x-x_k)  \rangle  +  \langle   \Omega(\breve{x}_k - x_k) , \Omega A (x - x_k)  \rangle  | + \epsilon \, \| x - x_k \| \cdot \| A (x - x_k) \|
	\end{align*}
	Due to the fact that $\breve{x}_k - x_k \in \K_k$ and the sketched Petrov-Galerkin projection property, the second term is null. Then, we have
	\begin{align*}
	\| x - x_k \|_A^2 & \leq |  \langle   \Omega (x - \breve{x}_k), \Omega A (x-x_k)  \rangle   | + \epsilon \, \| x - x_k \| \cdot \| A (x - x_k) \| \\
	& \leq  \langle   x - \breve{x}_k, A (x - x_k)  \rangle  + \epsilon \, \| x - \breve{x}_k \| \cdot \| A (x - x_k) \| + \epsilon \, \| x - x_k \| \cdot \| A (x - x_k) \| \\
	& \leq  \| x - \breve{x}_k \|_A \cdot \| x - x_k \|_A + \epsilon \, \mathrm{cond}(A)^\frac{1}{2} \| x - \breve{x}_k \|_A \cdot \|x-x_k\|_A + \epsilon \, \mathrm{cond}(A)^\frac{1}{2} \|x - x_k\|_A^2
	\end{align*}
	Dividing by $\| x - x_k \|_A$ on both sides yields~\cref{quasi1}.
\end{proof}
\Cref{prop:quasi1} states that the $A$-norm error of ${x}_k$ is close to optimal, given that $\Omega$ is an $\epsilon$-embedding with $\epsilon < \mathrm{cond}(A)^{-\frac{1}{2}}$. This property can be satisfied with high probability if $\Omega$ is an $(\epsilon, \delta, k+2)$ OSE. However, for typical OSEs, this condition would require the usage of sketching dimension $\ell \geq \mathcal{O}( \mathrm{cond}(A) \times k )$ (see \cite{woodruff}), which is fine for very well-conditioned systems, but unrealistic for even moderately ill-conditioned systems. Our thorough numerical experiments, on the other hand, have shown that ROPM should achieve comparable accuracy to the classical OPM, even when using sketching matrices with $\ell = 2k$ or $4k$. However, the convergence of ROPM in this scenario may exhibit an irregular behavior, characterized by occasional spikes in error (sometimes dramatic) under unfavorable conditions for sketching, as demonstrated in~\Cref{fig:bound2}. This figure depicts the convergence of the OPM and ROPM solutions for the \textit{Si41Ge41H72} system of size $n \approx 200000$, taken from~\cite{suitesparse}, and shifted by $1.2138I$ to ensure positive-definiteness and a condition number of $\approx 10^4$. The right-hand-side vector here was taken as a normalized Gaussian vector. The solution of such a shifted system can be relevant, for instance, in the context of inverse iteration for finding the smallest eigenvalue(s) of the \textit{Si41Ge41H72} operator. In the ROPM, the sketching matrix $\Omega$ was taken as SRHT of varying size. \Cref{fig:bound2}~confirms sufficient convergence of ROPM even for $\ell = 2k$ sketching dimension. However, we  clearly reveal dramatic spikes of the error at the beginning of the descent of the error. While during the descent, the behavior of the convergence becomes more favorable. This phenomenon has motivated further exploration into the properties of the linear system that impact the accuracy of ROPM. 

The following theorem provides more general error bound for ROPM.
\begin{theorem} \label{prop:quasi2}
	Assume that $A$ is positive-definite. We have
	\begin{align} \label{eq:quasi2}
	\| x - x_k\|_A \leq \left (1+\alpha^2_k \beta^2_k \right)^{\frac{1}{2}} \| x - \breve{x}_k\|_A, 
	\end{align}
	where  $x_k \in \K_k$ and $\breve{x}_k \in \K_k$ are the sketched and the classical Petrov-Galerkin projections, respectively, and
	\begin{subequations}
		\begin{align}
		\alpha_k &:=  \frac{\langle {x} - \breve{x}_{k-1}, \Pk  A \breve{v}_k \rangle }{\langle x - \breve{x}_{k-1},\Pks A \breve{v}_k \rangle}, \\
		\beta_k &:= \|A^{-\frac{1}{2}}\Pks \breve{v}_{k+1}\|  \langle \breve{v}_{k+1}, A \breve{v}_{k} \rangle  \frac{\langle \breve{v}_{k}, \breve{x}_{k} \rangle}{\| x - \breve{x}_k\|_A},
		\end{align}
	\end{subequations}
	and where $\breve{v}_k$ is a unit vector spanning the range of $(I - \mathcal{P}_{\K_{k-1}}) \mathcal{P}_{\K_{k}}$.
	\begin{proof}
	    Note that for all $k$, the vector $\breve{v}_k$ is also the $k$-th vector built by Arnoldi iteration. It holds that  
		\begin{align*}
		\| x_k - \breve{x}_k\|_A^2 &=  \langle  x_k - \breve{x}_k, A (x_k - \breve{x}_k) \rangle =\langle  x_k - \breve{x}_k, A (x_k - x) \rangle = \langle  x_k - \breve{x}_k,  (I - \Pks) A (x_k - x) \rangle \\ 
		&= -\langle  x_k - \breve{x}_k,  \Pks (\breve{v}_{k+1} \breve{v}^t_{k+1}) A (x_k - x) \rangle
		= -\langle x_k - \breve{x}_k,  \Pks \breve{v}_{k+1}\rangle |\breve{v}^t_{k+1} A \breve{v}_{k}| |\breve{v}^t_{k} x_k | \\
		&\leq \| x_k - \breve{x}_k\|_A \|A^{-\frac{1}{2}} \Pks \breve{v}_{k+1}\| |\breve{v}^t_{k+1} A \breve{v}_{k}| |\breve{v}^t_{k} x_k |.
		\end{align*}
		Furthermore, from the relation 
		$$ (x-\breve{x}_{k-1})^t \Pk A \breve{x}_k  = (x-\breve{x}_{k-1})^t \Pks A x_k $$
		we deduce that  
		$$  (x-\breve{x}_{k-1})^t \Pk A \breve{v}_k \breve{v}_k^t \breve{x}_k = (x-\breve{x}_{k-1})^t \Pks A \breve{v}_k \breve{v}_k^t x_k$$
		which in turn yields the following relation 
		$$ \breve{v}^t_{k} x_k  = \breve{v}^t_{k} \breve{x}_k \frac{\langle {x} - \breve{x}_{k-1}, \Pk  A \breve{v}_k \rangle }{\langle x - \breve{x}_{k-1},\Pks A \breve{v}_k \rangle}.$$
		By substituting this relation to the derived earlier bound for $\| x_k - \breve{x}_k\|_A^2$ and dividing both sides by $\| x_k - \breve{x}_k\|_A$, we obtain: 
		$$\|  x_k - \breve{x}_k\|_A \leq \|A^{-\frac{1}{2}} \Pks \breve{v}_{k+1}\| |\breve{v}^t_{k+1} A \breve{v}_{k}| |\breve{v}^t_{k} \breve{x}_k| \frac{|\langle {x} - \breve{x}_{k-1}, \Pk  A \breve{v}_k \rangle| }{|\langle x - \breve{x}_{k-1},\Pks A \breve{v}_k \rangle|} = |\alpha_k||\beta_k| \| x - \breve{x}_k\|_A.$$
		The statement of the theorem then follows by the Pythagorean equality.
	\end{proof}
\end{theorem}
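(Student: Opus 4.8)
The plan is to exploit the exact optimality of the deterministic iterate. Since $A$ is SPD and $\breve{x}_k$ is the classical Petrov--Galerkin iterate, \cref{pgc} identifies $\breve{x}_k$ with the $A$-orthogonal projection of $x$ onto $\K_k$, so $\langle x-\breve{x}_k,A(x_k-\breve{x}_k)\rangle=0$ because $x_k-\breve{x}_k\in\K_k$. The Pythagorean identity in the $A$-inner product then gives $\|x-x_k\|_A^2=\|x-\breve{x}_k\|_A^2+\|x_k-\breve{x}_k\|_A^2$, so \cref{eq:quasi2} reduces to the scalar estimate $\|x_k-\breve{x}_k\|_A\le|\alpha_k\beta_k|\,\|x-\breve{x}_k\|_A$, from which the theorem follows by taking square roots.

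To bound $\|x_k-\breve{x}_k\|_A$, I would start from $\|x_k-\breve{x}_k\|_A^2=\langle x_k-\breve{x}_k,A(x_k-\breve{x}_k)\rangle$ and use the $A$-orthogonality of $x-\breve{x}_k$ to $\K_k$ to rewrite this as $\langle x_k-\breve{x}_k,A(x_k-x)\rangle=-\langle x_k-\breve{x}_k,r_k\rangle$. Now $r_k=b-Ax_k\in\K_{k+1}$, and the sketched Petrov--Galerkin condition \cref{spgc} gives $\Pks r_k=0$, hence $r_k=(I-\Pks)r_k$; decomposing $r_k$ along $\K_k$ and the Arnoldi direction $\breve{v}_{k+1}$, pairing against $x_k-\breve{x}_k\in\K_k$ (which annihilates the bare $\breve{v}_{k+1}$-part by $\ell_2$-orthogonality and the $\K_k$-part through $I-\Pks$), and using $b=r_0\perp\breve{v}_{k+1}$ together with the Hessenberg structure \cref{projectedA} (so that $\langle\breve{v}_{k+1},Ax_k\rangle=\langle\breve{v}_k,x_k\rangle\langle\breve{v}_{k+1},A\breve{v}_k\rangle$), I expect to reach
\[
\|x_k-\breve{x}_k\|_A^2=-\langle x_k-\breve{x}_k,\Pks\breve{v}_{k+1}\rangle\,\langle\breve{v}_{k+1},A\breve{v}_k\rangle\,\langle\breve{v}_k,x_k\rangle .
\]
A Cauchy--Schwarz step in the $A$-inner product, $|\langle x_k-\breve{x}_k,\Pks\breve{v}_{k+1}\rangle|\le\|x_k-\breve{x}_k\|_A\|A^{-\undemi}\Pks\breve{v}_{k+1}\|$, and division by $\|x_k-\breve{x}_k\|_A$ then give $\|x_k-\breve{x}_k\|_A\le\|A^{-\undemi}\Pks\breve{v}_{k+1}\|\,|\langle\breve{v}_{k+1},A\breve{v}_k\rangle|\,|\langle\breve{v}_k,x_k\rangle|$.

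It remains to trade the unknown coordinate $\langle\breve{v}_k,x_k\rangle$ for the available $\langle\breve{v}_k,\breve{x}_k\rangle$. Both Petrov--Galerkin conditions force $\Pk A\breve{x}_k=r_0=\Pks Ax_k$: indeed $A\breve{x}_k=r_0-\breve{r}_k$ with $\Pk\breve{r}_k=0$ and $\Pk r_0=r_0$ (since $r_0=b\in\K_1\subset\K_k$), while $Ax_k=r_0-r_k$ with $\Pks r_k=0$ and $\Pks r_0=r_0$ (since $\Pks$ fixes $\K_k$). Testing the identity $\Pk A\breve{x}_k=\Pks Ax_k$ against $x-\breve{x}_{k-1}$ and discarding the $\K_{k-1}$-components of $\breve{x}_k$ and $x_k$ — legitimate since $A(x-\breve{x}_{k-1})=\breve{r}_{k-1}\perp\K_{k-1}$ and $A\K_{k-1}\subset\K_k$ — collapses the identity onto the $\breve{v}_k$ direction and yields $\langle\breve{v}_k,x_k\rangle=\alpha_k\langle\breve{v}_k,\breve{x}_k\rangle$, with $\alpha_k$ the ratio of the sketched and the deterministic projections of $A\breve{v}_k$ against $x-\breve{x}_{k-1}$. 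Plugging this in, recognising the definition of $\beta_k$, and invoking the Pythagorean identity completes the proof.

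I expect the main obstacle to be the bookkeeping in the second step: one must recognise that $r_k\in\K_{k+1}$, isolate its unique component orthogonal to $\K_k$ (the Arnoldi vector $\breve{v}_{k+1}$), and then see that, once paired against a vector of $\K_k$, the functional $\langle\breve{v}_{k+1},A\,\cdot\,\rangle$ detects only the $\breve{v}_k$-coordinate of $x_k$ — both facts rely on the Arnoldi/Hessenberg relations \cref{projectedA} and on the normalisation $x_0=0$ ensuring $b\in\K_1$. A secondary subtlety is justifying that the $\K_{k-1}$-parts of $\breve{x}_k$ and $x_k$ may be dropped simultaneously on the two sides of $\Pk A\breve{x}_k=\Pks Ax_k$; this is precisely where $\breve{r}_{k-1}\perp\K_{k-1}$ enters. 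Everything else reduces to Cauchy--Schwarz and the Pythagorean identity.
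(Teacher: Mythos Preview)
Your proposal is correct and follows essentially the same route as the paper's proof: the Pythagorean split in the $A$-inner product, the reduction of $\|x_k-\breve{x}_k\|_A^2$ to a term involving $\Pks\breve{v}_{k+1}$ via the sketched Galerkin condition and the Hessenberg structure, the Cauchy--Schwarz step in the $A$-metric, and the identity $\Pk A\breve{x}_k=r_0=\Pks Ax_k$ tested against $x-\breve{x}_{k-1}$ to extract $\langle\breve{v}_k,x_k\rangle=\alpha_k\langle\breve{v}_k,\breve{x}_k\rangle$. If anything, you make explicit a couple of points (the common value $r_0$, the role of $A\K_{k-1}\subset\K_k$ in dropping the $\K_{k-1}$-components) that the paper leaves to the reader.
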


The validity of the bound in~\Cref{prop:quasi2} was substantiated through a set of numerical experiments. Our results showed that~\cref{eq:quasi2} accurately captures the convergence of~\Cref{algo:randarnoldi} error (which is a particular case of ROPM as shown in the next section), as can be seen from~\Cref{fig:alpha_beta}. Additionally, it can be seen that the error bound and the actual error become practically indistinguishable as the OPM (\Cref{algo:detarnoldi}) and ROPM solutions enter the region of steep convergence. More details related to the numerical validation of this bound can be found in~\Cref{section:results}.

The result stated in \Cref{prop:quasi2} holds without any assumptions on the sketching matrix used in the sketched Petrov-Galerkin projection. It implies that the accuracy of the ROPM can be ensured as long as $|\Pks \breve{v}_{k+1}|$ is sufficiently small. This conclusion is based on the following observation: 
\begin{subequations} \label{eq:alphabeta}
	\begin{align}
	|1 - \alpha_k^{-1}| &= \frac{|\langle {x} - \breve{x}_{k-1}, (\Pk-\Pks)  A \breve{v}_k \rangle|}{|\langle x - \breve{x}_{k-1},\Pk A \breve{v}_k \rangle|}  \leq \|\Pks  \breve{v}_{k+1}\| \frac{\| \Pk {x} - \breve{x}_{k-1}\| | \breve{v}_{k+1}^t A \breve{v}_k| }{|\langle x - \breve{x}_{k-1},\Pk A \breve{v}_k \rangle|} \label{eq:alphak} \\
	\intertext{and} 
	|\beta_k| &\leq \|\Pks \breve{v}_{k+1}\|  \|A^{-\frac{1}{2}}\|   |\langle \breve{v}_{k+1}, A \breve{v}_{k} \rangle|  \frac{|\langle \breve{v}_{k}, \breve{x}_{k} \rangle|}{\| x - \breve{x}_k\|_A}. \label{eq:betak}
	\end{align}
\end{subequations}
Relations \cref{eq:alphabeta} show that when $\|\Pks \breve{v}_{k+1}\|$ is sufficiently small, we have $\alpha_k \approx 1$ and $\beta_k \approx 0$. Note that if $\Omega$ is an $\varepsilon$-embedding for $\K_{k+1}$, then $\|\Pks \breve{v}_{k+1}\| = \mathcal{O}(\varepsilon)$, which combined with~\cref{eq:alphabeta} implies the accuracy of ROPM when using an OSE of sufficiently large size. It is worth noting that the bound in~\cref{eq:betak} for $\beta_k$ may be overly conservative, as $\Pks \breve{v}_{k+1}$ is expected to be equally represented by all eigenvectors of $\Pk A^{-1} \Pk$, not just the dominant eigenvectors.

Finally, it has been revealed by our numerical experiments that the coefficients $\alpha_k, \beta_k$ capture two features of ROPM solver behavior. The spikes seem to be captured by the coefficient $\alpha_k$. A more durable increase of the quasi-optimality constant seem to be captured by the coefficient $\beta_k$. This observation is supported by the results shown in~\Cref{fig:alpha_beta}, where it is evident that the spikes in the error correspond to the spikes in the value of $\alpha_k$. Furthermore, at the end of the convergence, the bases of the spikes no longer fit exactly the OPM error, which seems to be captured by a durable increase of the coefficient $\beta_k$.  Based on~\cref{eq:alphak}, it can be hypothesized that the accuracy of the ROPM solution $x_k$ is largely dependent on the orthogonality between the error ${x} - \breve{x}_{k-1}$ from the $k-1$-th iteration and the image $\Pk A \breve{v}_k$ of the $k$-th standard Krylov vector $\breve{v}_k$. Clearly this phenomenon shall depend on the properties of the linear system that is being solved. Its analysis is left for future research.

\begin{figure}[H] \label{fig:ROPM}
	\begin{subfigure}[t]{.47\textwidth}
		\centering
		\includegraphics[width=\linewidth]{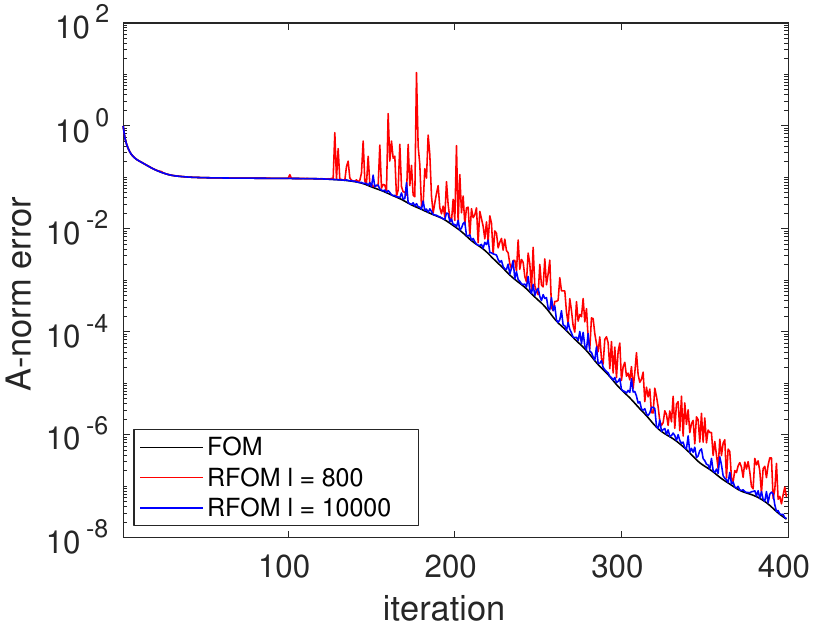}
		\caption{RFOM convergence.}\label{fig:bound2}
	\end{subfigure}
	\hfill 
	\begin{subfigure}[t]{.47\textwidth}
		\centering
		\includegraphics[width=\linewidth]{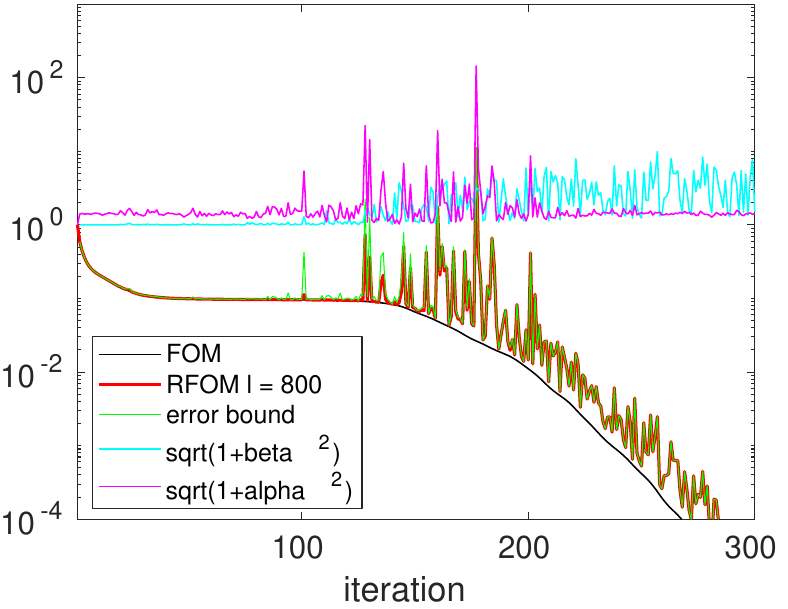}
		\caption{Effect of $\alpha_k$ and $\beta_k$ on irregularity of RFOM.}\label{fig:alpha_beta}
	\end{subfigure}
    \caption{RFOM convergence for the shifted \textit{Si41Ge41H72} system.} 
\end{figure}

Given a factorization $A = B^t B$, it is possible to split the sketched Petrov Galerkine condition and require instead that $B (x - x_k) \perp^\Omega B \K_k$. If $x_k$ verifies this condition, it is straightforward to derive
$\|x - x_k \|_A \leq \frac{1 + \epsilon}{1- \epsilon} \| x - \breve{x}_k \|_A.$
We observed that imposing this split sketched Petrov-Galerkin condition in our experiment, the resulting convergence rate is identical to that of OPM solver, and doesn't feature any spike on matrices \textit{G-clust5-s25} and \textit{El3D} presented in~\Cref{section:results}. However, straightforwardly imposing this condition requires access to $B x$.

\subsection{Randomized Arnoldi}\label{section:rarnoldi}

In this section, we consider a specific ROPM, the randomized Arnoldi solver~\Cref{algo:randarnoldi}. It is also denoted as RFOM since it performs a (sketched) orthogonalization against the full Krylov basis at each iteration. We give an indirect bound that links the residual of~\Cref{algo:detarnoldi} to that of~\Cref{algo:randarnoldi}. Numerical experiments comparing those two algorithms are presented in~\Cref{section:results}. First, the following lemma shows that~\Cref{algo:randarnoldi} satisfies~\Cref{def:ropm}.

\begin{lemma}\label{lemma:randarnoldiropm}
    In infinite precision arithmetics, if $\Omega$ is an $\epsilon$-embedding of $\K_{\kmax}$, then~\Cref{algo:randarnoldi} is an ROPM and does not break before the maximal Krylov subspace is generated.
\end{lemma}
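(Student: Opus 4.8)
The plan is to run \Cref{algo:randarnoldi} in exact arithmetic and prove, by induction on $j$, that it produces $v_1,\dots,v_{j+1}$ forming a sketched orthonormal basis of $\K_{j+1}$ with strictly positive sub-diagonal coefficient $h_{j+1,j}$, as long as the maximal Krylov subspace has not yet been built; this simultaneously yields the ``does not break'' claim and the sketched Arnoldi identity $AV_k=V_{k+1}H_{k+1,k}$, from which the two conditions of \Cref{def:ropm} follow by a short computation. The bulk of the argument is the induction; the one point that really needs care is the invertibility of the projected Hessenberg matrix used in the solve step, which I treat last.

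For the induction the key fact is that an $\epsilon$-embedding of $\K_{\kmax}$ with $\epsilon<1$ obeys $\|\Omega x\|_2^2\ge(1-\epsilon)\|x\|_2^2$, hence is injective on $\K_{\kmax}$ and on every $\K_j$, $j\le\kmax$. The base case $v_1=r_0/\|\Omega r_0\|_2$ spans $\K_1$ (here $r_0\ne 0$). For the step, assume $v_1,\dots,v_j$ are sketched orthonormal and span $\K_j$; injectivity makes them a genuine basis, and forces $v_j\notin\K_{j-1}$, since $v_j\in\K_{j-1}$ together with $\Omega v_j\perp\Omega v_1,\dots,\Omega v_{j-1}$ would give $\|\Omega v_j\|_2=0$, contradicting $\|\Omega v_j\|_2=1$. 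Expanding $v_j$ in the monomial basis of $\K_j$, this says its $A^{j-1}r_0$-coefficient is nonzero, so $Av_j$ has a nonzero $A^{j}r_0$-component; as long as $\K_{j+1}\ne\K_j$ (i.e. the maximal Krylov subspace has not been reached) this gives $Av_j\notin\K_j$. The new direction $z_j$ is $Av_j$ minus its sketched-orthogonal projection onto $\K_j$, so it lies in $\K_{j+1}$, and injectivity of $\Omega$ there turns $Av_j\notin\K_j$ into $z_j\ne 0$, i.e. $h_{j+1,j}=\|\Omega z_j\|_2>0$. Hence $v_{j+1}=z_j/h_{j+1,j}$ is well defined, $\Omega v_{j+1}$ extends the orthonormal set, and $\mathrm{span}\{v_1,\dots,v_{j+1}\}=\K_j+\mathrm{span}\{Av_j\}$ contains $A^{j}r_0$ and has dimension $j+1$, hence equals $\K_{j+1}$. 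This closes the induction, so the recurrence runs without breakdown until a basis of the maximal Krylov subspace has been produced.

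Given the basis, the two ROPM conditions are quick. The subspace condition is immediate from $x_k=x_0+V_k\rho_k\in x_0+\mathrm{range}(V_k)=x_0+\K_k$. For the sketched Petrov--Galerkin condition, write $S_{k+1}:=\Omega V_{k+1}$ (orthonormal columns, by construction) and $\beta:=\|\Omega r_0\|_2$, so that $\Omega r_0=\beta\,\Omega v_1=\beta\,S_{k+1}e_1$ and, by the Arnoldi identity, $\Omega AV_k=S_{k+1}H_{k+1,k}$. Then $\Omega r_k=\Omega r_0-\Omega AV_k\rho_k=S_{k+1}(\beta e_1-H_{k+1,k}\rho_k)$, and since $S_k^{t}S_{k+1}=[\,I_k\ \ 0\,]$ we get $(\Omega V_k)^{t}\Omega r_k=\beta e_1-H_k\rho_k$, with $H_k$ the leading $k\times k$ block of $H_{k+1,k}$. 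This vanishes because the algorithm chooses $\rho_k=H_k^{-1}(\beta e_1)$; hence $r_k\perp^\Omega\K_k$ and \Cref{algo:randarnoldi} satisfies \Cref{def:ropm}.

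The step requiring the most care is the invertibility of $H_k=S_k^{t}\Omega AV_k$, which is what makes $\rho_k$ well defined. When $A$ is positive definite I would derive it from positivity of the sketched bilinear form on $\K_k$: for $\rho\ne 0$ set $w=V_k\rho\ne 0$, so $w,Aw\in\K_{k+1}$ and $\rho^{t}H_k\rho=\langle\Omega w,\Omega Aw\rangle$; applying the defining inequality \cref{subspaceembedding1} on $\K_{k+1}$ together with $\|w\|_2\|Aw\|_2\le\mathrm{cond}(A)^{1/2}\|w\|_A^2$ gives $\rho^{t}H_k\rho\ge(1-\epsilon\,\mathrm{cond}(A)^{1/2})\|w\|_A^2>0$ once $\epsilon<\mathrm{cond}(A)^{-1/2}$, the same threshold appearing in \Cref{prop:quasi1}. (The Arnoldi recurrence itself never stalls regardless of this; only the solve step needs it.) The delicate part is to obtain this invertibility under a weaker hypothesis than $\epsilon<\mathrm{cond}(A)^{-1/2}$, or to make explicit which standing assumptions on $A$ the statement presupposes.
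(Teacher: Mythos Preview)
Your argument follows the same skeleton as the paper's: an induction showing the sketched orthonormalization never divides by zero (using injectivity of $\Omega$ on $\K_{\kmax}$), the sketched Arnoldi identity $AV_k=V_{k+1}H_{k+1,k}$ with $(\Omega V_k)^{t}\Omega AV_k=H_k$, and the reduction of the sketched Petrov--Galerkin condition to the reduced system $H_k\rho_k=\beta e_1$. Your induction is considerably more explicit than the paper's (which only says ``it is then straightforward to show by induction''), but the content is the same; in particular your argument that $Av_j\notin\K_j$ via the leading monomial coefficient is a clean way to justify a step the paper leaves implicit.

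The one genuine divergence is the invertibility of $H_k$. The paper does \emph{not} invoke positive-definiteness of $A$ or the bound $\epsilon<\mathrm{cond}(A)^{-1/2}$; it argues instead that $(\Omega V_k)^{t}\Omega AV_k\rho_k=0$ implies $\Omega AV_k\rho_k=0$ (since $\Omega v_1,\dots,\Omega v_k$ are orthonormal), hence $AV_k\rho_k=0$ by the embedding inequality, hence $\rho_k=0$ because $A$ is nonsingular and $V_k$ has full column rank. Taken at face value this would give invertibility for any nonsingular $A$ and any $\epsilon<1$, a stronger conclusion than yours. Your caution here is well placed, however: the first implication in that chain tacitly requires $\Omega AV_k\rho_k\in\mathrm{range}(\Omega V_k)=\Omega\K_k$, whereas $AV_k\rho_k$ lies only in $\K_{k+1}$, so as written that step is not fully justified (and indeed, for general nonsingular $A$ the projected Hessenberg can be singular, just as in deterministic FOM). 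Your positivity argument, while more restrictive in its hypotheses, is rigorous and dovetails with the standing SPD assumption used throughout~\Cref{section:quasi}.
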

\begin{proof} 
We show that the sketched normalization of $z$ (line 10) is not a division by $0$ until the maximum Krylov subspace is generated, and that the sketched Petrov-Galerkin condition is met. 

Since the Krylov subspace sequence is strictly increasing until the maximum Krylov subspace is generated, the vector $z$ obtained after the orthogonalization (line 8) is a non-zero vector of $\K_{\kmax}$. $\Omega$ being a subspace embedding of $\K_{\kmax}$,
$$ \| z \| > 0 \implies \frac{1}{1+\epsilon} \| \Omega z \| > 0 \implies \| \Omega z \| > 0,$$
hence line 10 is safe until the maximum Krylov subspace is generated. It is then straightforward to show by induction that the vectors $v_1, \cdots, v_k$ form a sketched orthonormal basis of $\K_k$ and that we get the randomized counterpart of the Arnoldi relation:
\begin{align*}
\begin{cases}
A V_k = V_{k+1} H_{k+1, k}\\
\left( \Omega V_k \right)^t \Omega A V_k = H_k.
\end{cases}
\end{align*}
As to the ROPM characterization, the sketched Petrov-Galerkin condition \eqref{spgc} is implied by
\begin{align} \label{spgc_basis}
    \left( \Omega V_k \right)^t \Omega r_k = 0_{\mathbb{R}^k}. 
\end{align}
Expressing $x_k$ as $x_0 + V_k \rho_k$,
\begin{align*}
    \left( \Omega V_k \right)^t \Omega r_k = 0_{\mathbb{R}^k} \iff \left( \Omega V_k \right)^t \Omega \left( r_0 - A V_k \rho_k \right) = 0  \iff H_k \rho_k = \left( \Omega V_k \right)^t \Omega r_0.
\end{align*}
For $k \leq \kmax$, the vectors $\Omega v_1, \cdots \Omega v_k$ form an orthonormal basis of $\K_k$, yielding for any $\rho_k \in \R^k$
$$ (\Omega V_k)^t \Omega A V_k \rho_k = 0 \implies \Omega  A V_k \rho_k = 0 \implies (1-\epsilon) \|A V_k \rho_k \|^2 = 0 \implies A V_k \rho_k = 0 \implies \rho_k = 0,$$
hence $H_k$ is non singular. We infer
$$ H_k \rho_k = (\Omega V_k)^t \Omega r_0 \iff \rho_k = H_k^{-1} (\Omega V_k)^t \Omega r_0.$$
\end{proof}

The randomized FOM method based on the RGS algorithm was presented in~\cite[Remark 4.1]{rgs}, although it was intended for solving linear systems with multiple right-hand sides. In the case of a single linear system, it can be simplified to the algorithm below.

\begin{algorithm}
	\caption{Randomized FOM (based on RGS~\cite{rgs}), a.k.a Randomized Arnoldi Solver} \label{algo:randarnoldi}
	\KwInput{$A \in \mathbb{R}^{n\times n}, b\in \mathbb{R}^n, x_0 \in \mathbb{R}^n$, and $\Omega \in \mathbb{R}^{ \ell \times n}$}
	\KwOutput{${x}_k$, ${V}_k$ a sketched orthonormal basis of $\K_k$, ${H}_k$ the matrix of the orthonormalization coefficients.}
	\SetKwComment{Comment}{/* }{ */}
	$r_0 \gets A x_0 - b$, $\beta \gets \|\Omega r_0\|$, ${v}_1 \gets \beta^{-1} r_0$, ${s}_1 \gets \beta^{-1} \Omega r_0$ \\
	\For{$j = 1, \hdots, k$} {
		$z \gets A {v}_j$ \\
		$p \gets \Omega z$ \\
		\For{$i = 1, \hdots, j$} {
			${h}_{i, j} \gets \langle s_i, p\rangle$ \\
			$p = p - {h}_{i, j} s_i$
		}
		$z \gets z - V_j H_{[1:j, \, j]}$ \\
		$s' \gets \Omega  z $, $h_{j+1,j} \gets \| s' \|$ \\
		${v}_{j+1} \gets {h}_{j+1,j}^{-1} z$, ${s}_{j+1} \gets {h}_{j+1,j}^{-1} s'$ }
	$\rho_k \gets {H}_k^{-1} (\beta e_k) $ \\
	${x}_k \gets x_0 + {V}_k \rho_k$\\
	Return ${x}_k$, ${V}_k$ and ${H}_k$
\end{algorithm}
In steps 5-7 of~\Cref{algo:randarnoldi}, we orthogonalized $\Omega A v_j$ to $\mathrm{range}(\Omega V_{j})$ with the MGS projector, in order to maintain a direct correspondence with~\Cref{algo:detarnoldi}. However, this orthogonalization could also be performed with more stable methods~\cite{rgs,BRGS}. For example, the orthogonalization coefficients $H_{[1:j, j]}$ can be obtained by solving the following low-dimensional least-squares problem:
$$ H_{[1:j, j]} = \argmin_{x \in \R^j} {\|\Omega (V_{j} x - A v_j) \|} $$
using the Householder method. According to~\cite{BRGS}, the estimate ${x}_k$ obtained through the randomized FOM satisfies the sketched variant of the Petrov-Galerkin orthogonality condition: $${r}_k \perp^\Omega \mathcal{K}_k.$$

In order to compare the residual produced by randomized Arnoldi to its deterministic counterpart in \Cref{prop:diffresbound}, we derive the following lemma.
\begin{lemma}
Let $H_k$, $V_k$, $x_k$ be the Hessenberg matrix, Krylov vectors and estimate produced by $k$ iterations of~\Cref{algo:randarnoldi}, respectively. Let us suppose $x_0 = 0_{\mathbb{R}^n}$. Then we have 
\begin{align*}
    A x_k = r_0 + \|\Omega r_0 \| s_{k,1} v_{k+1} = b + \| \Omega b\| s_{k,1} v_{k+1},
\end{align*}
where $s_k^t$ is the product of the scalar $h_{k+1,k}$ and the last row of $H_k^{-1}$, hence $s_{k,1}$ is the product of the scalar $h_{k+1,k}$ and the bottom-left corner of the matrix $H_k^{-1}$.
We derive the similar equation involving the byproducts of~\Cref{algo:detarnoldi}:
\begin{align*}
    A \breve{x_k} = r_0 + \| r_0 \| \breve{s}_{k,1} \breve{v}_{k+1} = b + \| b \| \breve{s}_{k,1} + \breve{v}_{k+1}.
\end{align*}
\end{lemma}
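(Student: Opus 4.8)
The plan is to derive the identity from the Arnoldi relation $AV_k = V_{k+1}H_{k+1,k}$ together with the explicit formula $\rho_k = H_k^{-1}(\beta e_k)$ for the coefficient vector, exactly mirroring the classical computation that expresses $A\breve{x}_k$ in terms of the residual and the next basis vector. First I would write, using $x_0 = 0$ so that $x_k = V_k\rho_k$ and $r_0 = b$,
\begin{align*}
Ax_k = AV_k\rho_k = V_{k+1}H_{k+1,k}\rho_k = V_k H_k \rho_k + h_{k+1,k}\, v_{k+1}\, e_k^t \rho_k,
\end{align*}
where I have split the $(k+1)\times k$ Hessenberg matrix $H_{k+1,k}$ into its top $k\times k$ block $H_k$ and its bottom row $h_{k+1,k}e_k^t$. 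Since $\rho_k = H_k^{-1}(\beta e_k)$ with $\beta = \|\Omega r_0\|$, the first term is $V_k H_k H_k^{-1}(\beta e_k) = \beta V_k e_k$. The second term's scalar coefficient is $h_{k+1,k}\, e_k^t H_k^{-1}(\beta e_k) = \beta\, (h_{k+1,k}\, e_k^t H_k^{-1} e_k)$, i.e. $\beta$ times $h_{k+1,k}$ times the bottom-right entry of $H_k^{-1}$; but I must be careful with indexing, since the statement refers to $s_{k,1}$ as the bottom-\emph{left} corner. Indeed in this solver the right-hand side is $\beta e_k$ (not $\beta e_1$ as in standard GMRES/FOM presentations), because $v_1 = \beta^{-1}r_0$ is treated as the $k$-th rather than first basis direction in the reduced system; so $e_k^t H_k^{-1}(\beta e_k)$ picks out the $(k,k)$ entry, and to reconcile with the claimed "$s_{k,1}$ = bottom-left corner" I would note that the convention here numbers the columns of $V_k$ in reverse, or equivalently that the relevant entry of the last row $s_k^t := h_{k+1,k}\,(\text{last row of }H_k^{-1})$ is being labelled $s_{k,1}$. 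Either way, collecting terms gives
\begin{align*}
Ax_k = \beta V_k e_k + \beta\, s_{k,1}\, v_{k+1},
\end{align*}
and since $\beta V_k e_k$ is (by the reverse-indexing convention, or after identifying which column $r_0/\beta$ sits in) exactly $r_0 = b$, this is $Ax_k = r_0 + \|\Omega r_0\|\, s_{k,1}\, v_{k+1} = b + \|\Omega b\|\, s_{k,1}\, v_{k+1}$, using $x_0 = 0 \Rightarrow r_0 = b$ and $\beta = \|\Omega r_0\| = \|\Omega b\|$.

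For the deterministic companion identity, the argument is verbatim the same with $\breve{V}_k, \breve{H}_k, \breve{x}_k$ in place of $V_k, H_k, x_k$, using the deterministic Arnoldi relation \eqref{projectedA} and $\breve\rho_k = \breve{H}_k^{-1}(\|r_0\| e_k)$ from \Cref{algo:detarnoldi}; the only change is that the sketched norm $\|\Omega r_0\|$ is replaced by the Euclidean norm $\|r_0\|$, giving $A\breve{x}_k = r_0 + \|r_0\|\,\breve{s}_{k,1}\,\breve{v}_{k+1} = b + \|b\|\,\breve{s}_{k,1}\,\breve{v}_{k+1}$.

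The only real subtlety — hence the main thing to get right rather than a deep obstacle — is the bookkeeping of the indexing convention: confirming that in \Cref{algo:randarnoldi,algo:detarnoldi} the reduced right-hand side is $\beta e_k$, so that the scalar multiplying $v_{k+1}$ is indeed the entry of $h_{k+1,k}H_k^{-1}$ that the statement calls $s_{k,1}$ (the bottom-left corner), and that $\beta V_k e_k$ reproduces $r_0$. Everything else is a one-line consequence of the Arnoldi factorization and the definition of $\rho_k$, with no assumption on $\Omega$ needed beyond what makes the algorithm well-defined (\Cref{lemma:randarnoldiropm}).
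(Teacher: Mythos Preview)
Your approach---apply the Arnoldi relation $AV_k=V_{k+1}H_{k+1,k}$ and split the $(k+1)\times k$ Hessenberg into its top $k\times k$ block and the single-entry bottom row---is exactly the paper's argument. The paper writes the same thing compactly as $H_{k+1,k}H_k^{-1}=\begin{pmatrix}I_k\\ s_k^t\end{pmatrix}$ and applies it to the reduced right-hand side.

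However, the paragraph in which you invoke a ``reverse-indexing convention'' to reconcile bottom-right versus bottom-left is wrong, not merely a bookkeeping subtlety. The discrepancy you noticed is caused by what is evidently a typo in the algorithm listings: the line $\rho_k\gets H_k^{-1}(\beta e_k)$ should read $\rho_k\gets H_k^{-1}(\beta e_1)$. The paper's own proof confirms this, since it writes $x_k=V_kH_k^{-1}(\Omega V_k)^t\Omega r_0$ and uses $(\Omega V_k)^t\Omega r_0=\|\Omega r_0\|\,e_1$, which follows because $\Omega v_1,\ldots,\Omega v_k$ are orthonormal and $v_1=\beta^{-1}r_0$. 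With $e_1$ in place your computation gives $V_kH_k\rho_k=\beta v_1=r_0$ directly, and the scalar in front of $v_{k+1}$ is $h_{k+1,k}\,e_k^tH_k^{-1}(\beta e_1)=\beta\,h_{k+1,k}(H_k^{-1})_{k,1}$, i.e.\ $\beta$ times the bottom-\emph{left} corner, exactly matching the statement's $s_{k,1}$. No reindexing convention exists in this paper; once you replace $e_k$ by $e_1$ the argument is clean and coincides with the paper's one-line proof.
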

\begin{proof} 
We show it for $A x_k$, the computations are the same for the deterministic counterpart.
\begin{align*}
    A x_k = A \left[V_k H_k^{-1} (\Omega V_k)^t \Omega r_0\right] = V_{k+1} H_{k+1,k} \left[H_k^{-1} (\Omega V_k)^t \Omega r_0 \right] = V_{k+1} \left[\begin{matrix} I_k \\ \hline s_k^t \end{matrix}\right] \| \Omega r_0\| e_1 = r_0 + \|\Omega r_0\| s_{k,1} v_{k+1}
\end{align*}
\end{proof}
We can now derive an easy-to-compute indirect bound.
\begin{proposition}
\label{prop:diffresbound}
Let $\Omega \in \R^{\ell \times n }$ be an $\epsilon$-$\ell_2$subspace embedding of $\K_{k+1}$. If $x_k \in \K_k$ satisfies the sketched Petrov-Galerkin condition \eqref{spgc}, and if $\breve{x}_k$ satisfies the Petrov-Galerkin condition $\eqref{pgc}$,
\begin{align}\label{rescomparison}
    & \| r_k \| \leq \| A x_0 \| + (1 + \epsilon) \, \| \Omega r_0 \| \cdot |s_{k,1}|, \\
    & \| r_k \| \leq \| \breve{r}_k \| + \|r_0\| \left[ | \breve{s}_{k,1} | + \sqrt{\frac{1+\epsilon}{1 - \epsilon}} \, | s_{k,1} | \right].
\end{align}
\end{proposition}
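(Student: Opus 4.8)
The plan is to reduce both inequalities to the closed-form expressions for the residuals supplied by the lemma just above, and then to convert between the sketched norm $\|\Omega\,\cdot\,\|_2$ and the ambient norm $\|\cdot\|_2$ with the $\epsilon$-embedding property \cref{subspaceembedding2}; beyond that, only the triangle inequality is needed, so I do not anticipate a genuine obstacle.

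First I would record the residual identities implied by the lemma. Taking $x_0 = 0$ (so $r_0 = b$) and subtracting $A x_k = r_0 + \|\Omega r_0\|\,s_{k,1}\,v_{k+1}$ from $b$ gives
\[
r_k \;=\; b - A x_k \;=\; -\,\|\Omega r_0\|\, s_{k,1}\, v_{k+1},
\]
and the deterministic counterpart gives $\breve r_k = -\,\|r_0\|\,\breve s_{k,1}\,\breve v_{k+1}$; for a nonzero initial guess the extra $\|A x_0\|$ in the first bound is what survives from $b = A x_0 + r_0$. Next I would bound the basis vectors. The columns built by~\Cref{algo:randarnoldi} are sketched orthonormal, so $\|\Omega v_{k+1}\|_2 = 1$; since $v_{k+1} \in \K_{k+1}$ and $\Omega$ is an $\epsilon$-embedding of $\K_{k+1}$, \cref{subspaceembedding2} gives $\|v_{k+1}\|_2 \le (1-\epsilon)^{-1/2}$, which is $\le 1+\epsilon$ for the range of interest ($\epsilon \le \undemi$). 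The deterministic basis is truly orthonormal, so $\|\breve v_{k+1}\|_2 = 1$; and since $r_0 \in \K_1 \subseteq \K_{k+1}$, the embedding also gives $\|\Omega r_0\|_2 \le (1+\epsilon)^{1/2}\|r_0\|_2$.

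The first inequality then follows at once by taking $\ell_2$-norms in the expression for $r_k$ and using $\|v_{k+1}\|_2 \le 1+\epsilon$. For the second, I would write $\|r_k\|_2 \le \|\breve r_k\|_2 + \|r_k - \breve r_k\|_2$, substitute the two residual identities, and apply the triangle inequality once more to get
\[
\|r_k - \breve r_k\|_2 \;\le\; \|\Omega r_0\|_2\,|s_{k,1}|\,\|v_{k+1}\|_2 \;+\; \|r_0\|_2\,|\breve s_{k,1}|\,\|\breve v_{k+1}\|_2 ;
\]
plugging in $\|v_{k+1}\|_2 \le (1-\epsilon)^{-1/2}$, $\|\breve v_{k+1}\|_2 = 1$, and $\|\Omega r_0\|_2 \le (1+\epsilon)^{1/2}\|r_0\|_2$ collapses the right-hand side to $\|r_0\|_2\big[\,|\breve s_{k,1}| + \sqrt{(1+\epsilon)/(1-\epsilon)}\,|s_{k,1}|\,\big]$, giving the claim.

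The proof is thus essentially bookkeeping on top of the lemma. The two points I would be careful about are: invoking the $\epsilon$-embedding only on vectors that really lie in $\K_{k+1}$ (here $r_0$ and $v_{k+1}$), so the constants $(1+\epsilon)^{1/2}$ and $(1-\epsilon)^{-1/2}$ sit in the right spots; and the harmless constant relaxation $(1-\epsilon)^{-1/2} \le 1+\epsilon$ used to present the first bound, valid for the typical $\epsilon \le \undemi$.
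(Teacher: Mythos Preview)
Your proposal is correct and follows essentially the same route as the paper: invoke the lemma to write $r_k$ and $\breve r_k$ as scalar multiples of $v_{k+1}$ and $\breve v_{k+1}$, then apply the triangle inequality together with the $\epsilon$-embedding bounds $\|v_{k+1}\|_2 \le (1-\epsilon)^{-1/2}$ and $\|\Omega r_0\|_2 \le (1+\epsilon)^{1/2}\|r_0\|_2$. You are in fact more explicit than the paper about the constant relaxation $(1-\epsilon)^{-1/2} \le 1+\epsilon$ needed to reach the stated form of the first bound, which the paper simply calls a direct consequence of the lemma.
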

\begin{proof}
    The first bound is a direct consequence of the previous lemma. For the second one, subtracting the deterministic and randomized counterparts, we get
\begin{align*}
    \breve{r}_k - r_k = A x_k - A \breve{x}_k = \| \Omega r_0 \| s_{k,1} v_{k+1} -  \| r_0 \| \breve{s}_{k,1} \breve{v}_{k+1}.
\end{align*}
The norm of the latter verifies:
\begin{align*}
    \| A x_k - A \breve{x}_k \| & \leq \|r_0\| \, |s_{k,1}| + \sqrt{1+\epsilon} \, \| r_0 \| \, | \breve{s}_{k,1} | \frac{1}{\sqrt{1-\epsilon}}.
\end{align*}
The result follows by factoring by $\|r_0\|$.
\end{proof}
The first bound is interesting only if the first guess $x_0$ is set to zero. The coefficient $s_{k,1}$ (resp. $\breve{s}_{k,1}$) is the bottom-right corner $h_{k+1,k}$ (resp $\breve{h}_{k+1,k}$) of upper-Hessenberg matrix $H_k$ (resp. $\breve{H}_k$) multiplied by bottom-left corner of its inverse. In our experiments, this coefficient is small and in this case the bound is tight. We illustrate this in~\Cref{fig:resbound} on one of the matrices from our test set (see~\Cref{table}), \textit{Dubcova3} from~\cite{suitesparse}. We assume that the embedding was successful with $\epsilon = \undemi$.

\begin{figure}
    \hspace{5.95cm}\includegraphics[width=.33\linewidth]{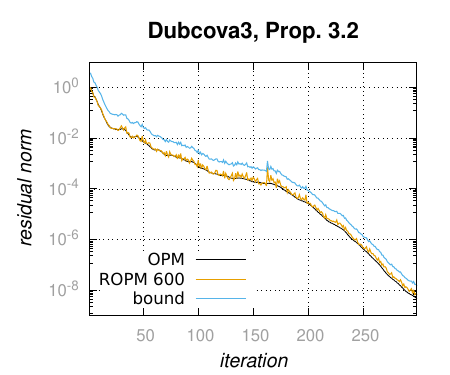}
  \caption{Residual bound}
  \label{fig:resbound}
\end{figure}

\section{Short-recurrence and randomization} \label{section:CG}
In this section we consider the problem of building a sketched o  rthogonal basis of the Krylov subspace with constant storage cost for symmetric matrices. We notice the appearance of noise in the Hessenberg matrix obtained from randomized Arnoldi in~\Cref{algo:randarnoldi}. We see experimentally that the distribution of this noise is close to Gaussian distribution. We explore the implications of the empirical claim that this noise is Gaussian, with regards to constant-storage cost orthonormalization processes. We consider an approximation of CG through randomization in~\Cref{algo:arCG} that avoids one synchronization. While in general~\Cref{algo:arCG} is not proven to be stable, the numerical experiments presented in~\Cref{section:results} highlight situations in which its rate of convergence is similar to that of~\Cref{algo:detcg}.

\subsection{Impact of randomization based on sampling}\label{section:symmetry}

Denoting $V_k$ the matrix formed by the vectors $v_1, \hdots , v_{k}$ and $H_k$ the matrix formed by the $k$ first rows of $H_{k+1,k}$ produced by~\Cref{algo:randarnoldi}, we have the following implication:
\begin{align}\label{randarnoldirelation}
\begin{cases}
    A V_k = V_{k+1} H_{k+1,k} \\
    ( \Omega V_k )^t \Omega A V_k = H_k
\end{cases}
\end{align}
The symmetry of $H_k$ is lost, as the left-hand side of second equation in~\cref{randarnoldirelation} isn't symmetric anymore in general.
\begin{proposition} \label{prop:noCommutation}
Suppose $A \in \R^{n \times n}$ is symmetric and has $\ell+q$ distinct eigenvalues, where $\ell \ll n$. Let $\Omega \in \R^{\ell \times n}$ be an arbitrary matrix. If $\Omega^t \Omega$ and $A$ commute, then at least $q$ eigenvectors of $A$ fall into $\Omega$'s kernel.
\end{proposition}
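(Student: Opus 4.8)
The plan is to use the spectral decomposition of $A$ and the hypothesis that $\Omega^t \Omega$ commutes with $A$ to show that $\Omega^t\Omega$ must be (block-)diagonal in the eigenbasis of $A$, and then count ranks. First I would write $A = \sum_{i=1}^{\ell+q} \lambda_i P_i$, where the $\lambda_i$ are the distinct eigenvalues and the $P_i$ are the orthogonal projectors onto the corresponding eigenspaces, so that $I_n = \sum_i P_i$. The key classical fact I would invoke is that a symmetric matrix $M = \Omega^t\Omega$ commutes with $A$ if and only if $M$ leaves each eigenspace of $A$ invariant, equivalently $M = \sum_i P_i M P_i$ (this follows by sandwiching $MA = AM$ between $P_i$ and $P_j$ and using $P_i A = \lambda_i P_i$: for $i \neq j$ one gets $(\lambda_i - \lambda_j) P_i M P_j = 0$, hence $P_i M P_j = 0$).

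Next I would exploit the rank constraint. Since $\Omega \in \R^{\ell \times n}$, the matrix $M = \Omega^t\Omega$ has rank at most $\ell$. On the other hand, $M = \sum_i P_i M P_i$ is a direct sum of the blocks $M_i := P_i M P_i$ acting on the respective eigenspaces, so $\mathrm{rank}(M) = \sum_{i=1}^{\ell+q} \mathrm{rank}(M_i)$. If fewer than $q$ of the blocks $M_i$ were zero, then at least $\ell+1$ of them would be nonzero, each contributing rank at least $1$, forcing $\mathrm{rank}(M) \geq \ell+1 > \ell$, a contradiction. Hence at least $q$ of the blocks vanish, i.e. $P_i M P_i = 0$ for at least $q$ indices $i$. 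For each such $i$, positivity of $M = \Omega^t\Omega$ gives $P_i M P_i = 0 \implies \Omega P_i = 0$ (since $\|\Omega P_i v\|^2 = v^t P_i \Omega^t\Omega P_i v = 0$ for all $v$), so the entire eigenspace $\mathrm{range}(P_i)$ lies in $\ker \Omega$. Taking one eigenvector from each of these $q$ eigenspaces (each eigenspace being at least one-dimensional) yields the claimed $q$ eigenvectors of $A$ in $\ker\Omega$.

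The argument is essentially a rank-counting pigeonhole once the commutation-implies-block-diagonal step is in place, so I do not anticipate a serious obstacle; the main point requiring care is the passage from $P_i \Omega^t \Omega P_i = 0$ to $\Omega P_i = 0$, which uses positive semidefiniteness of $\Omega^t\Omega$ rather than just symmetry of $M$, and the bookkeeping that distinct eigenvalues give genuinely orthogonal eigenspaces so that the ranks add. One should also note the implicit assumption $\ell + q \le n$ (otherwise there are not that many distinct eigenvalues), and that $q \ge 1$ is needed for the statement to be non-vacuous; both are consistent with the regime $\ell \ll n$ stated in the hypothesis.
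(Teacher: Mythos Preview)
Your proof is correct and follows essentially the same route as the paper: commutation forces $\Omega^t\Omega$ to be block-diagonal in the eigenbasis of $A$, and then a pigeonhole/rank count shows at least $q$ blocks must vanish. The paper phrases the counting step by selecting one eigenvector from each eigenspace and observing that their images under $\Omega$ are mutually $\ell_2$-orthogonal in $\R^\ell$ (hence at most $\ell$ can be nonzero), whereas you argue directly via $\mathrm{rank}(\Omega^t\Omega)=\sum_i \mathrm{rank}(M_i)\le \ell$ and then use positive semidefiniteness to upgrade $P_iMP_i=0$ to $\Omega P_i=0$; your version is slightly cleaner and in fact yields the marginally stronger conclusion that at least $q$ \emph{entire} eigenspaces lie in $\ker\Omega$.
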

\begin{proof}
Since $A$ is symmetric, its spectral decomposition is $A = P \Lambda P^t$, where $P$ is an orthogonal matrix formed by the eigenvectors of $A$ and $\Lambda$ is a diagonal matrix formed by the corresponding eigenvalues. Without loss of generality, suppose that $m_1, \cdots m_{\ell+q} \in \mathbb{N}^*$, \, $m_1 + \cdots + m_{\ell+q} = n$ and $\Lambda = \text{BlockDiag}\left( \lambda_1 I_{m_1}, \, \cdots, \, \lambda_{\ell + 1} I_{m_{\ell+q}} \right)$, where $\text{BlockDiag}$ denotes a block-diagonal matrix with blocks given as arguments and $I_{m_i}$ is the identity matrix of size $m_i$, for $1 \leq i \leq \ell + q$. Any matrix that commutes with $A$  also commutes with $\Lambda$. Hence any matrix that commutes with $A$ is block diagonal with blocks of consecutive size $m_1, m_2, \cdots , m_{\ell+q}$. This yields $P^t \Omega^t \Omega P = \text{BlockDiag} \left( B_{m_1}, \cdots, B_{m_{\ell + q}} \right)$, where $B_{m_i}$ are matrices of size $m_i \times m_i, \; 1 \leq i \leq \ell + q$. Let us denote the columns of $P$ as $p_1, \cdots , p_{n}$. Let us take $i_1 \in \{1, \cdots, m_1\}$, $i_2 \in \{m_1 +1 , \ldots m_1 + m_2 \}$, $\ldots$ $i_{\ell+q} \in \{m_1 + ... + m_\ell + 1 , \cdots , m_1 + \ldots + m_\ell + m_{\ell+q}\}$. Let us denote $\tilde{P}$ the tall-and-skinny matrix formed by the columns $p_{i_1}, \cdots, p_{i_{\ell+q}}$. Then the previous equation shows that $\left(\Omega \tilde{P}\right)^t  \Omega \tilde{P}$ is diagonal. Since $\ell+q$ non-zero vectors cannot be $\ell_2$-orthogonal in $\mathbb{R}^\ell$, at least $q$ of those vectors are null, i.e at least $q$ eigen-vectors of $A$ fall into $\Omega$'s kernel.
\end{proof}
The typical SPD operator having $\Theta(n)$ eigenvalues, this theorem disqualifies the natural sufficient condition such that $x,y \mapsto \langle \Omega A x, \Omega y \rangle$ is symmetric. Then with our choice of OSE matrices (Gaussian or SRHT), we study the upper-Hessenberg matrix $H_k$ obtained from~\Cref{algo:randarnoldi}. Denoting $\Gamma_k = H_k - \breve{H}_k$, \Cref{fig:noise_4096,fig:GaussianNoise_overall,fig:GaussianNoise_one} suggest that the coefficients above the super-diagonal of $H_k$ have a distribution close to that of a gaussian white noise.

\Cref{fig:GaussianNoise} showcases the distribution of the coefficients above the superdiagonal of the Hessenberg matrix obtained from~\Cref{algo:randarnoldi}. In~\Cref{fig:noise_4096} we see noise above the superdiagonal that resembles gaussian noise. \Cref{fig:GaussianNoise_overall} shows the statistic-distribution of all the coefficients above the super-diagonal on one simulation (we choose intervals whose length is half a standard deviation). \Cref{fig:GaussianNoise_one} shows the statistic-distribution of one coefficient above the super-diagonal drawn on multiple distributions. In our experiments, this phenomenon occurs already with low sampling size. We stress out that for the case of SRHT and Gaussian OSE, one can write 
\begin{align*}
\gamma_{i,j} = \left[ \left( \frac{1}{\ell}\sum_{s=1}^\ell W_{i,j,s} \right) - \mathbb{E}\left[W_{i,j,1} \, | \, v_i, \, v_j \right] \right] + \mu_{i,j}
\end{align*}
where $\mu_{i,j} = \langle A v_i, v_j \rangle - \langle A \breve{v}_i, \breve{v}_j \rangle$ and the $W_{i,j,s}$ are identically distributed random variables. If they were independent from each other and independent from $v_i, v_j$, this expression would simplify and qualify for the central limit theorem.

\begin{figure}
  \begin{subfigure}[t]{.33\textwidth}
    \centering
    \includegraphics[width=\linewidth]{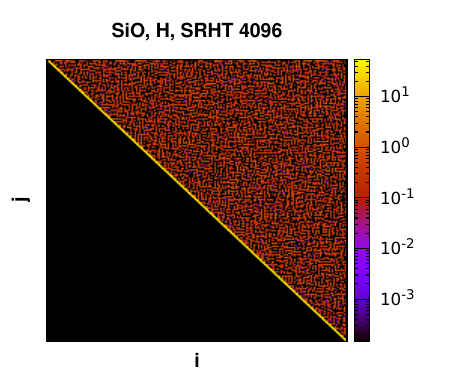}
    \caption{4096 sampling size}\label{fig:noise_4096}
  \end{subfigure}
  \begin{subfigure}[t]{.33\textwidth}
    \centering
    \includegraphics[width=\linewidth]{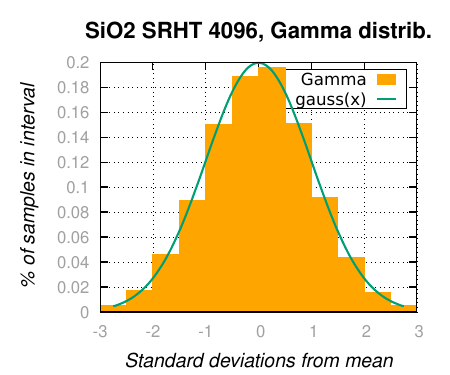}
    \caption{Overall distribution}\label{fig:GaussianNoise_overall}
  \end{subfigure}
  \begin{subfigure}[t]{.33\textwidth}
    \centering
    \includegraphics[width=\linewidth]{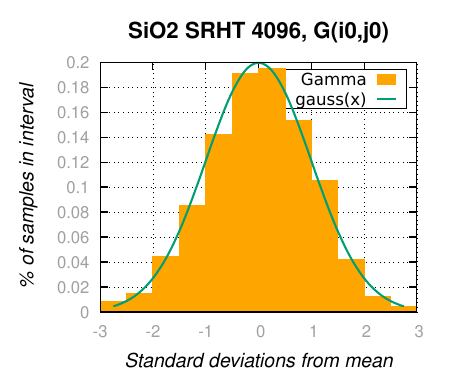}
    \caption{$\Gamma_{i_0, j_0}$ distribution}\label{fig:GaussianNoise_one}
  \end{subfigure}
  \caption{Distribution of $\Gamma_k$ (noise above the superdiagonal)}
  \label{fig:GaussianNoise}
\end{figure}

We investigate the empirical implications of considering the entries $h_{i,j}$ of $H_k$ above the superdiagonal as gaussian variables. One cannot ignore these coefficients, as
\begin{align*}
    \| \Omega \mathcal{P}^\Omega_{\K_k}( A v_k - h_{k,k} v_k - h_{k-1,k} v_{k-1})\|^2 = \sum_{j = 1}^{k-2} h_{i,k}^2. 
\end{align*}
This equation means that, if we only use the last two basis vectors to try and sketched orthonormalize $A v_k$ against $\K_k$, the vector that remains has its squared sketched norm greater than that of a $k-2$-dimensional gaussian vector. 

We now consider the case of an algorithm allowing $r$-terms recurrence. Let $\K_k$ be the Krylov subspace of index $k \in \{1, \cdots n-1\}$, let $\Omega \in \R^{\ell \times n}$ be either SRHT or Gaussian, and suppose that $\Omega$ is an $\epsilon$-embedding of $\K_k$. Suppose that $v_1, \cdots, v_k$ form a sketched orthonormal basis of $\K_k$, with $\| \Omega r_0\| v_1 = r_0$. Let us suppose that an algorithm, having built this basis, must proceed and build $v_{k+1}$ with only $r$ vectors $u_1, \cdots, u_r$ of $\R^n$, $r \ll k$ of $\R^n$ in memory, using only linear combinations of these $r$ vectors, $A$ and $\Omega$. The number of sketches that could be stored by this algorithm is not restricted. 
\begin{lemma}
Let $r$ be a positive integer such that $2r \ll k-2$. Let $u_1, \cdots, u_r \in \K_k$. Let us denote $U_{2r} \in \R^{2r}$ the matrix whose columns are formed by $u_1, \cdots u_r, A u_1, \cdots A u_r$. If $(\Omega V_{k-2})^t \Omega A v_k$ is a Gaussian vector with entries following $\mathcal{N}(\mu, \sigma^2)$ distribution, then
\begin{align*}
    \min_{x \in \R^{2r}} \| \Omega (\mathcal{P}_{k-2}^\Omega A v_k - U_{2r} x) \|^2 = \| g \|^2
\end{align*}
where $g$ is a random gaussian vector of dimension $k - 2r - 2$ with entries also following $\mathcal{N}(\mu, \sigma^2)$.
\end{lemma}
\begin{proof}
We detail the proof for the case where $\mathrm{dim} \; \mathrm{Range} (U_{2r}) \cap \mathrm{Span}\langle v_{k-1}, v_k \rangle = 0$. We recall that $r$ is taken such that $2r \ll k-2$. First, apply a sketched orthonormalization process to $u_1, \cdots u_r, A u_1, \cdots A u_r$ and obtain the vectors $w_1, \cdots w_s$, where $s$ is at most $2r$. Complete this basis in a sketched orthonormal basis of $\K_{k-2}$ and obtain the vectors $w_1, \cdots w_{k-2}$. We now have two orthonormal bases of $\Omega \K_{k-2}$, namely the vectors $w_i$ and the vectors $v_i$ built by~\Cref{algo:randarnoldi}. Denote $f \in \mathcal{L}(\Omega \K_{k-2})$ the linear operator mapping the latter to the former. By construction, $f$ is an isometry of $\Omega \K_{k-2}$. As such, the image of a gaussian vector by $f$ is also a gaussian vector. Hence, expressing $h_{1,k} v_1 + \cdots + h_{k-2,k} v_{k-2}$ in the new basis $w_1, \cdots, w_{k-2}$ :
\begin{align*}
    h_{1,k} v_1 + \cdots + h_{k-2,k} v_{k-2} = \eta_1 w_1 + \cdots + \eta_{k-2} w_{k-2},
\end{align*} we get random gaussian coordinates $(\eta_i)_{1 \leq i \leq k-2}$, each following the same distribution $\mathcal{N}(\mu, \sigma^2)$. We stress out that $\mathrm{Span} \langle w_1 \cdots w_s \rangle = \mathrm{Range}\left(U_{2r}\right)$. Let us denote $W_s$ the matrix which columns are formed by vectors $w_1, \cdots w_s$. Now, it is clear that
\begin{align*}
    \argmin_{x \in \R^{2r}} \| \Omega ( \mathcal{P}_{k-2}^\Omega Av_k - U_{2r} x) \|^2 = \argmin_{y \in \R^s} \| \Omega ( \mathcal{P}_{k-2}^\Omega A v_k - W_s y ) \|^2 = (\Omega W_s)^t \mathcal{P}_{k-2}^\Omega A v_k.
\end{align*}
Finally,
\begin{align*}
    \mathcal{P}_{k-2}^\Omega A v_k - W_s (\Omega W_s)^t \mathcal{P}_{k-2}^\Omega A v_k = \eta_{s+1} w_{s+1} + \cdots + \eta_{k-2} w_{k-2} =: g.
\end{align*}
The latter is a random gaussian vector of dimension $k-2 - (s+1) + 1 = k-2-s$. Recalling that $s \leq 2r$, its dimension is at least $k - 2 - 2r$. Recalling that $w_1, \cdots w_{k-2}$ are sketched orthonormal, the sketched norm of the whole vector is indeed the $\ell_2$-norm of $( \eta_i)_{s+1 \leq i \leq k-2}$.

The argument is almost identical for the case $\mathrm{dim} \; \mathrm{Range} (U_{2r}) \cap \mathrm{Span}\langle v_{k-1}, v_k \rangle = 1$. In that case, start the new basis with $w_1$ a sketched unit vector spanning this intersection, can complete it first in a sketched orthonormal basis of $\mathrm{Range}(U_{2r})$, then in a sketched orthonormal basis of $\K_{k-2}$. If the dimension is $2$, start with $w_1, w_2$ sketched orthonormal vectors spanning this intersection, then complete for $\mathrm{Range}(U_{2r})$. Clearly, they are worst case than that which we detailed, which concludes the proof.
\end{proof}

This lemma shows that if the noise of the Hessenberg matrix from~\Cref{algo:randarnoldi} is indeed gaussian, $u := v_k$ is not the vector of $\K_k \setminus \K_{k-1}$ that should be chosen to build $v_{k+1}$ by sketch orthonormalizing $A u$ with respect to $v_1, \cdots v_k$. One should choose this vector $u$ in a low-dimensional subspace of $\K_k$ and chose it such that the sketched projection of $A u$ onto $\K_k$ also lies in a low dimensional subspace of $\K_k$, and ensure access to bases of these low-dimensional subspace at each iteration.

\comment{To build $v_{k+1}$, such that $v_1, \cdots, v_k, v_{k+1}$ is a sketched orthonormal basis of $\K_{k+1}$, this algorithm substracts the contribution of $\Span{u_1, \cdots, u_r}$ from the vector $A v_k$. Furthermore, the orthogonal of $\K_k$ in $\K_{k+1}$ is a vector-space of dimension $1$. So any algorithm proceeding to build $v_{k+1}$ will build a vector that is linearly dependent with that produced by~\Cref{algo:randarnoldi}. In other words,
\begin{align*}
    v_{k+1} = \alpha ( A v_k - h_{1,k} v_1 + \cdots - h_{k-2,k} v_{k-2} - h_{k-1,k} v_{k-1} - h_{k,k} v_k), \quad \alpha \in \R
\end{align*}
The matrix whose columns are formed by $u_1, \cdots, u_r$ is referred to as $U_r$. Let us suppose that, at the end of step $k-1$, we successfully spanned $h_{1,k-1} v_1 + \cdots +h_{k-1,k-1} v_{k-1}$ with a linear combination of $U_r x$. Then we show that
\begin{align*}
    \| \Omega \left(\mathcal{P}_{\K_k}^\Omega  A v_k - U_r z \right) \|^2, \; \; z \in \R^r
\end{align*}
is greater than the squared $\ell_2$ norm of a Gaussian vector of dimension $k-r-2, \; r+2 \ll k$. If we don't know in advance the coefficients $h_{1,k}, \cdots, h_{k,k}$, we suppose in this discussion, in the light of previous discussions, that they are random Gaussian variables. We first suppose that the intersection of $\Span{v_{k-1}, v_k}$ and $\Span{u_1, \cdots u_r}$ is of dimension zero, i.e that the vectors $u_1, \cdots u_r$ span neither $v_{k-1}$ nor $v_k$. By applying a randomized Gram-Schmidt process to $u_1, \cdots, u_r$ we obtain $\widetilde{u}_1, \cdots, \widetilde{u}_r$.  We then complete this set of vectors in a sketched orthonormal basis $\widetilde{u}_1, \cdots, \widetilde{u}_{k-2}$ of $\K_{k-2}$. We finaly complete this set of vectors in a sketched orthonormal basis $\widetilde{u}_1, \cdots \widetilde{u}_k$ of $\K_k$, i.e the vectors $\widetilde{u}_{k-1}, \widetilde{u}_k$ span $v_{k-1}, v_k$. We denote $f$ the linear application mapping $\Omega \widetilde{u}_1, \cdots, \Omega \widetilde{u}_{k-2}$ to $\Omega v_1, \cdots \Omega v_{k-2}$. Then $f$ is an isometry of $\Omega \K_{k-2}$. Hence, the coordinates $g \in \R^{k-2}$ of $\mathcal{P}^\Omega_{\K_k} A v_k$ in the basis $\widetilde{u}_1, \cdots, \widetilde{u}_{k-2}$ are the image of a Gaussian vector by an isometric application, hence they also form a Gaussian vector. The first $r$ coordinates of $g$ are the contributions of $\widetilde{u}_1, \cdots, \widetilde{u}_r$, which are the only ones that can cancel with $u_1, \cdots, u_r$ stored in memory. The $k-2-r$ remaining coordinates of $g$ represent the contributions of $\widetilde{u}_{r+1}, \cdots, \widetilde{u}_{k-2}$, which aren't spanned by any vector in memory. Hence, 
\begin{align*}
\min_{z \in \R^r} \, \| \Omega \left(\mathcal{P}_{\K_k}^\Omega  A v_k - U_r z \right) \|^2 \geq \sum_{j=r+1}^{k-2} g_j^2
\end{align*}
where $g$ is the (gaussian) image of $h_{1 \leq i \leq k-2}$ by $f$. If the dimension of the intersection of $\Span{v_{k-1}, v_k}$ and $\Span{u_1, \cdots u_r}$ is $1$ (resp. $2$), the same argument shows that $\|\Omega(A v_k - U_r z)\|^2$ is greater than the squared euclidean norm of a $k-r-1$ (resp $k-r$) dimensional Gaussian vector. We can derive this argument again for $2r$ vectors $u_1, \cdots, u_r, A u_1, \cdots, A u_r$. Hence the global sketched orthogonality $v_{k+1} \perp^\Omega \K_k$ cannot be achieved with $r$-vectors and their product by $A$.}

\subsection{Approximated through randomization CG algorithm}\label{section:arCG}
The previous section shows that the traditional derivation of CG algorithm (based on symmetric tri-diagonal Hessenberg matrix) cannot be trivially extended through randomization. However we introduce in~\Cref{algo:arCG} a process that replaces the inner products of CG by sketched ones.

\begin{algorithm}
\caption{arCG: approximated through randomization CG}
\label{algo:arCG}
\KwInput{$A \in \R^{n \times n}$ an SPD matrix, \, $b \in \mathbb{R}^n$, \, $\eta \in \mathbb{R}^{+*}$, \, $k_{\text{max}} \in \mathbb{N}^*$, $x_0 \in \mathbb{R}^n$, $\Omega$ an $\epsilon$-embedding of $\K_{\kmax}$ }
\KwOutput{$x_k$ an estimate of the solution of $Ax=b$ }
\SetKwComment{Comment}{/* }{ */}
$r_0 \gets b - A x_0$ \\
$p_0 \gets r_0 $ \\
Compute $\Omega r_0$ \\
\While{$\| \Omega r_k\| \geq \eta \, \|b\|$ \textbf{and} $k \leq k_{\text{max}}$} {
\If{$k \geq 1$}{
$\delta_k \gets \frac{\|\Omega r_k\|^2}{\|\Omega r_{k-1}\|^2}$ \\
$p_k \gets r_k + \delta p_{k-1}$ \\
}
Compute $A p_k, \Omega A p_k, \Omega p_k$ \\ 
$\gamma_k \gets - \frac{\| \Omega r_k\|^2}{ \langle   \Omega A p_k,  \Omega p_k  \rangle }$ \\
$x_{k+1} \gets x_k + \gamma_k p_k$ \\
$r_{k+1} \gets r_k - \gamma_k A p_k$ \\
Compute $\Omega r_{k+1}$ \\
$k \gets k+1$
}
Return $x_k$
\end{algorithm}
We interpret~\Cref{algo:arCG} as an approximation of CG, rather than a proper randomization of CG, which we feel should fulfil the sketch Galerkin condition at each iteration. This algorithm does not. However, as we show in our numerical experiments, it can converge as fast as deterministic CG if the system is not too difficult. It can also save one synchronization per iteration if implemented in parallel. As this algorithm only imposes sketched orthogonality between last search directions and residuals, we analyzed the error relying only on this property. We were inspired by~\cite{whycg}, in which authors used the local  orthogonality approach to estimate the errors made by standard CG in finite precision arithmetics. Suppose infinite precision arithmetics for~\Cref{algo:arCG} and suppose that, for all $k \in \{1, \cdots, \kmax \}$,
\begin{align}\label{gaussquotient}
    \begin{cases}
    \gamma_k = \left( 1 \pm \tilde{\epsilon} \right) \frac{\|r_k\|^2}{\langle p_k, A p_k \rangle} \\
    \delta_k = \left( 1 \pm \tilde{\epsilon} \right) \frac{\|r_k \|^2}{\|r_{k-1}\|^2}
    \end{cases}
\end{align}
for some $\tilde{\epsilon} > 0 $. We then get, for all $k \in \{1, \cdots, \kmax - 1\}$,
\begin{align*}
    | \|x - x_k\|_A^2 - \|x - x_{k+1} \|_A^2 | & = | \| x - x_{k+1} + x_{k+1} - x_{k} \|_A^2 - \|x - x_{k+1} \|_A^2 | \\
    & \leq \|x_{k+1} - x_k \|_A^2 + 2 | \langle r_k, x_{k+1} - x_k \rangle | \\
    & \leq |\gamma_k| \left[ \frac{1 + \tilde{\epsilon}}{1 - \epsilon} \| \Omega r_k \|^2 \, + \, \frac{2 \epsilon}{1 - \epsilon} \| \Omega r_k \| \, \| \Omega p_k \| \right] \quad \text{using~\eqref{gaussquotient}}\\
    & \leq \frac{1 + \tilde{\epsilon} + 2 \epsilon}{1 - \epsilon} |\gamma_k| \| \Omega p_k \|^2 \quad \text{using } \; \| \Omega p_k \| \geq \| \Omega r_k \|.
\end{align*}
If we sum this inequation for consecutive $k$'s, we get the following result.
\begin{proposition}\label{prop:cgbound}
    Let $A \in \R^{n \times n}$ be an SPD matrix. Let $\kmax \in \mathbb{N}^*$ and let $\Omega \in \R^{\ell \times n}$ be an $\epsilon$-embedding of $\K_{\kmax}$. Assume that the coefficients $\gamma_k$ and $\delta_k$ computed by~\Cref{algo:arCG} verify~\eqref{gaussquotient} for all $k \leq \kmax$. Let $k,d \in \mathbb{N}$ such that $k+d \leq \kmax$. Then in infinite precision, estimates of~\Cref{algo:arCG} verify
\begin{align}\label{cgerror}
    \|x - x_k \|_A^2 - \|x - x_{k+d} \|_A^2 \leq \frac{1 + \tilde{\epsilon} + 2 \epsilon}{1 - \epsilon} \sum_{j = k}^{k+d-1} | \gamma_j | \| \Omega p_j \|^2.
\end{align}
\end{proposition}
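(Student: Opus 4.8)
The plan is to observe that the per-iteration estimate established in the computation immediately preceding the statement simply telescopes once summed, so nothing new is really needed beyond bookkeeping. First I would record that, under the standing hypothesis~\eqref{gaussquotient} (which holds for every index up to $\kmax$ by assumption), the chain of inequalities derived just above yields, for each $j$ with $1 \le j \le \kmax - 1$,
$$ \|x - x_j\|_A^2 - \|x - x_{j+1}\|_A^2 \;\le\; \Bigl| \|x - x_j\|_A^2 - \|x - x_{j+1}\|_A^2 \Bigr| \;\le\; \frac{1 + \tilde{\epsilon} + 2\epsilon}{1-\epsilon}\, |\gamma_j|\, \|\Omega p_j\|_2^2, $$
keeping only the one-sided version, since that is all that is required.

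Next I would fix $k,d \in \mathbb{N}$ with $k+d \le \kmax$ and sum this inequality over $j = k, k+1, \dots, k+d-1$. The left-hand side is a telescoping sum that collapses to $\|x - x_k\|_A^2 - \|x - x_{k+d}\|_A^2$, while the right-hand side becomes $\frac{1+\tilde\epsilon+2\epsilon}{1-\epsilon}\sum_{j=k}^{k+d-1}|\gamma_j|\,\|\Omega p_j\|_2^2$, which is precisely~\eqref{cgerror}. This completes the argument.

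There is essentially no genuine obstacle here: the substantive part — bounding $\|x_{j+1} - x_j\|_A^2$ and the cross term $\langle r_j, x_{j+1} - x_j\rangle$ through their sketched surrogates, and using $\|\Omega p_j\|_2 \ge \|\Omega r_j\|_2$ (which follows from the Pythagorean identity for the sketched inner product applied to $p_j = r_j + \delta_j p_{j-1}$ together with the sketched orthogonality of $r_j$ and $p_{j-1}$) — is already carried out before the proposition. The only point I would be careful to check is that hypothesis~\eqref{gaussquotient} is available at each index $j$ appearing in the sum; this is guaranteed since every such $j$ satisfies $j \le k+d-1 < \kmax$, so each summand-level inequality is legitimate before the terms are added.
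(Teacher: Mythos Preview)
Your proposal is correct and follows exactly the paper's approach: the paper derives the single-step bound immediately before the proposition and then states that summing it over consecutive indices yields~\eqref{cgerror}, which is precisely the telescoping argument you spell out.
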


\section{Experimental results} \label{section:results}
In this section we illustrate the convergence of~\Cref{algo:randarnoldi} and~\Cref{algo:arCG}.  \Cref{table} brings together various information about the matrices used in our experiments. The first Cond. column designates the original condition number of the system. The second Cond. column designates the condition number of the system after precondtionning if it was considered. \smallskip \\
\begin{table}[h]
\begin{tabular}{ |p{1.8cm}||p{1.2cm}|p{1cm}|p{2.1cm}|p{1.9cm}|p{2cm}|p{5.4cm}|}
\hline
 Name & Size & Cond. & Precond. & Cond. & Origin & Infos \\
 \hline
 Dubcova3   & 146689 & 3986 & - & -  & SuiteSparse  & -  \\
 Diff2D & 111919 & $10^{34}$ & B-Jacobi 50 & $10^5$ & FEM & 2D diffusion  \\
 El3D & 18883 & $10^{28}$ & B-Jacobi 8 & $10^6$ & FEM & Near incomp. 3D elasticity \\
 G-c5-s25 & 100000 & $10^5$ & - & - & Generated & 5 clust. radiuses $\frac{1}{4} \times$ centers\\
  G-c5-s025 & 100000 & $10^5$ & - & - & Generated & 5 clust. radiuses $\frac{1}{40} \times$ centers\\
 G-exp2 & 100000 & $10^2$ & - & - & Generated & Smooth spectrum decay \\
 G-exp3 & 100000 & $10^3$ & - & - & Generated & Smooth spectrum decay \\
 G-clust2 & 100000 & $10^2$ & - & - & Generated & 2 clust. radiuses $\frac{1}{4} \times$ centers \\
 G-clust3 & 100000 & $10^3$ & - & - & Generated & 2 clusters, size $\frac{1}{4} \times$ centers  \\
 \hline
\end{tabular}
 \caption{Set of matrices used in experiments}\label{table}
 \end{table}

\subsection{Randomized Orthogonal Projection methods}
In this section we discuss the convergence of randomized orthogonal projection methods by considering three problems arising from the discretization of different PDEs and two generated problems, each group of increasing difficulty. The matrices are described in~\Cref{table} and the convergence results are presented in \Cref{fig:RFOM}. In each problem, the sampling size of the sketching matrix $\Omega$ is set to be approximately five times the number of iterations that the deterministic method requires to converge. If a deterministic method converges in $k$ iterations, a good estimate $y_k$ to $x$ lies in the $k$-dimensional space $\K_k$. As such, we wish to embed a $k$-dimensional vector subspace with high probability. It is common to then choose $5k$ as a sampling size, as in~\cite{rgs}. 

We recall that in infinite precision arithmetic, an orthogonal projection method (OPM) produces a strictly decreasing sequence of energy norms (A-norms of the error). We chose~\Cref{algo:detarnoldi} to generate the sequence of estimates produced by OPM, since it is more stable than CG. In addition to the results presented here, ROPM was also tested on very well conditioned matrices (having condition number smaller than $10$), for which the sequence of errors produced by ROPM was also strictly decreasing and almost indistinguishable from that of OPM.

First problem, \textit{Dubcova3} was obtained from\cite{suitesparse}. Its condition number is approximately $3986$. The results are displayed in \Cref{fig:RFOM_dubcova3}.
We see already two difference with very well conditioned problems. First, the sequence of A-norm errors produced by ROPM, while still having an overall decreasing behavior, is not monotonically decreasing anymore. Small magnitude spikes of the error are observed, especially after the first iterations. Second, the overall convergence is delayed by a few iterations.
This delay becomes more important for some of the following problems. Finally, the bound from~\Cref{prop:quasi2} is indistinguishable from the ROPM error, though it is difficult to differentiate the roles of $\alpha_k$ and $\beta_k$ on this example.
Second problem is \textit{Diff2D} matrix arising from solving a 2D diffusion problem. Its initial condition number is of the order of $10^{34}$. Through block-Jacobi preconditioning with $50$ blocks, this condition number is reduced to the order of $10^5$. The results are displayed in \Cref{fig:RFOM_diff2d}. The general behavior is very close to the previous problem. However, the spikes become more frequent and their magnitude increases slightly towards the end of the iterations, when an error of $10^{-8}$ is reached. Again, the bound from~\Cref{prop:quasi2} is indistinguishable from the ROPM error, and it is hard to differentiate the roles of $\alpha_k$ and $\beta_k$.
Third problem is \textit{El3D} matrix coming from a FEM solving of a 3D  nearly incompressible elasticity problem. A more detailed description of this problem can be found in~\cite{hussam}. The initial condition number is on the order of $10^{28}$. By using block-Jacobi preconditioning with $8$ blocks, the condition number is reduced to the order of $10^6$. The convergence displayed in \Cref{fig:RFOM_el3d} highlights the previous observations. Coefficients $\alpha_k$ and $\beta_k$ are multiplied by $10^4$ simply to shift them away from the error curves on the logarithmic scale. The general behavior of the errors produced by ROPM is the same as that produced by OPM, but the magnitude of the spikes can become several orders of magnitude larger (here 3 on the largest spike). We slightly diminished the sampling size to stress out the spikes. In this numerical experiment, it seems that $\alpha_k$ and $\beta_k$ are correlated to different features of the solver's behavior. Indeed, $\alpha_k$ accurately captures the spikes; in the meanwhile, $\beta_k$ seems to be capturing the convergence delay in the last iterations. The bound is again indistinguishable from the ROPM error. The convergence rate is overall the same as that of OPM.

To further investigate these spikes, and eventually derive~\Cref{prop:quasi2}, we generated several matrices that lead to error spikes in more specific phases of the convergence. A symmetric matrix $A$ is generated by using a random orthogonal matrix and a diagonal matrix whose spectrum is purposely chosen. The random orthogonal matrix is generated as a block-diagonal matrix whose blocks are a random number of $2 \times 2$ rotation and random signs. Then $A$ is applied to a vector by applying the transpose of this orthogonal matrix, followed by the chosen diagonal matrix, followed by the orthogonal matrix. All generated matrices are of dimension $100000 \times 100000$. We choose spectra with five clusters  whose centers are exponentially decreasing. We discuss two such matrices while varying the gap between clusters as well as the gap between the eigenvalues within a cluster.

For the first matrix, the highest cluster is centered at $10^5$ and the smallest cluster is centered at $1$. The radius of the clusters is $\frac{1}{4}$ times the value of the center. The convergence results are presented in~\Cref{fig:RFOM_clust}. While we see a stairway-shaped decrease of the ROPM error, spikes of major magnitude occur at the transition between plateaus. The coefficients $\alpha_k$ accurately capture the spikes, and the bound is indistinguishable from the ROPM error. Second experiment considers the previous matrix and differs only by the size of the sketch, which is now $25$ times larger than the dimension $k$ of the subspace to be embedded. We observe in the results displayed in \Cref{fig:RFOM_clustmoresample} that the magnitude of the error spikes occurring during the plateaus are much smaller and sometimes disappear. However the error spikes occurring at the transition between two plateaus are still of important magnitude. For the last experiment we consider a  second generated problem and the sampling size is again $5$ times the dimension of the subspace we wish to embed. However, the clusters are tightened, as their radius is now $\frac{1}{40}$ times the value of the center. The results displayed in \Cref{fig:RFOM_tighterclust} have similarities with those obtained when increasing the sampling size. Similar behavior is observed as we decrease further the radius of the clusters.

In summary, these numerical experiments suggest that the spikes are related to properties of the system solved, especially its spectral properties. Indeed we observe for the matrices in our test set that clusters of larger radius lead to more error spikes inn more precise spots. Increasing the size of the sketch leads to a reduction of the magnitude of the error spikes during plateaus, but this effect is more limited for the error spikes occurring at the transition between the plateaus. The bound presented in~\Cref{prop:quasi2} was very tight in all our experiments. \Cref{fig:RFOM_el3d} suggests that $\beta_k$ is correlated to the quasi-optimality constant becoming durably larger, while $\alpha_k$ is accurately capturing the spikes. We observe a correlation between the apparition of the spikes and the most abrupt decrease of standard OPM error. In figure~\Cref{fig:RFOM_derivatives}, we show the $100$ first step of convergence of ROPM and OPM on \textit{G-clust5-s25}, with a sampling size $\ell = 1300$, and we plot the absolute value of the second (discrete) derivative of OPM's error. We see that in this example, some of ROPMs spikes coincide with spikes of this derivative. Finally, we also observe that in all our experiments, ROPM converges in almost the same number of iterations as OPM.

\begin{figure}
\begin{subfigure}[t]{.33\textwidth}
    \centering \includegraphics[width=\linewidth]{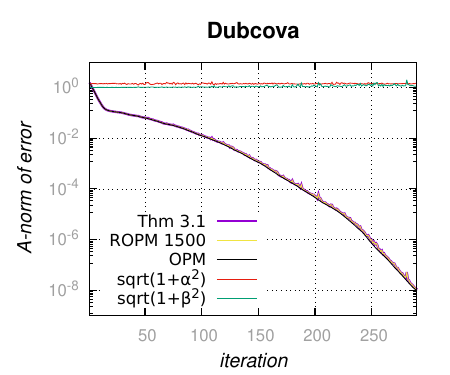}
    \caption{First natural example}
    \label{fig:RFOM_dubcova3}
\end{subfigure}
\begin{subfigure}[t]{.33\textwidth}
    \centering \includegraphics[width=\linewidth]{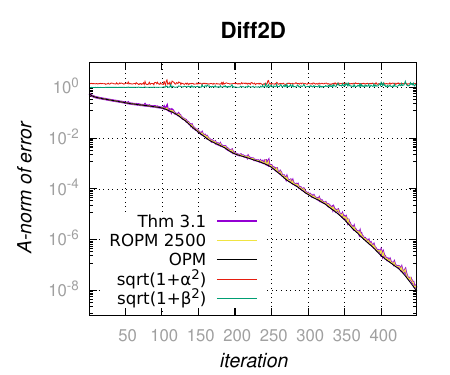}
    \caption{Harder natural example}
    \label{fig:RFOM_diff2d}
\end{subfigure}
\begin{subfigure}[t]{.33\textwidth}
    \centering \includegraphics[width=\linewidth]{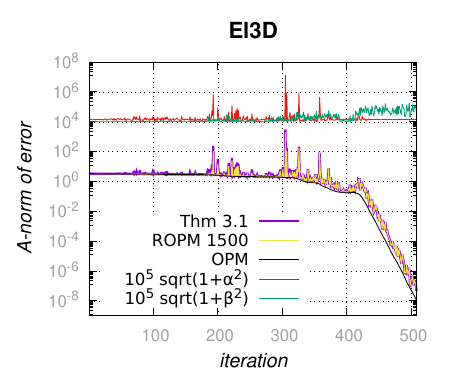}
    \caption{Harder natural example}
    \label{fig:RFOM_el3d}
\end{subfigure}
\medskip
\begin{subfigure}[t]{.33\textwidth}
    \centering \includegraphics[width=\linewidth]{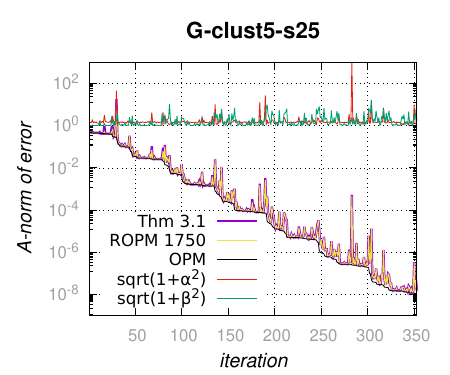}
    \caption{First generated matrix}
    \label{fig:RFOM_clust}
\end{subfigure}
\begin{subfigure}[t]{.33\textwidth}
    \centering \includegraphics[width=\linewidth]{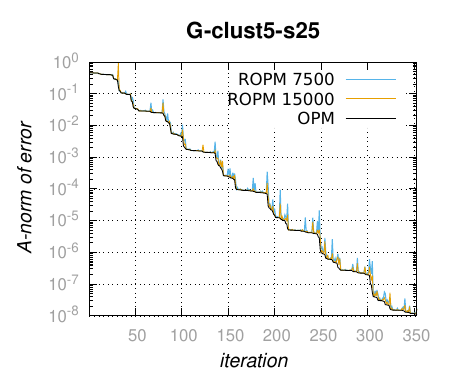}
    \caption{Greater sampling size}  \label{fig:RFOM_clustmoresample}
\end{subfigure}
\begin{subfigure}[t]{.33\textwidth}
    \centering \includegraphics[width=\linewidth]{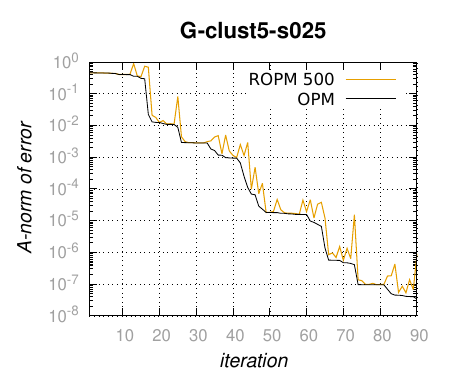}
    \caption{Tighter clusters}
    \label{fig:RFOM_tighterclust}
\end{subfigure}
\caption{Convergence of ROPM}
\label{fig:RFOM}
\end{figure}

It is worth noting that in our experiments, the Ritz values produced by deterministic and randomized algorithms (i.e eigenvalues of $\breve{H}_k$ and $H_k$) differ most on iterations corresponding to the spikes of largest magnitude. On such iterations, most of the spectra of both matrices overlap, but they differ at the extremity corresponding to the smallest Ritz values, as shown in~\Cref{fig:RFOM_spikes}.  We show the spectra of $H_{30}$ and $\breve{H}_{30}$ corresponding to the matrix \textit{G-clust5-s25} in~\Cref{fig:RFOM_it30}, where the sampling size is $\ell = 1300$. We see that the last Ritz value of $H_{30}$ is much smaller than that of $\breve{H}_{30}$. The next iteration on which the error of ROPM is close to that of OPM is the $32$nd iteration. We show the spectra of $H_{32}$ and $\breve{H}_{32}$ in~\Cref{fig:RFOM_it32}. We see that the last eigenvalue of $H_{32}$ is of the same order of magnitude as that of $\breve{H}_{32}$. In all our experiments, considering the spikes of largest magnitude, we were able to identify outliers in the spectrum of $H_k$ with respect to that of $\breve{H}_k$, and these outliers were always small eigenvalues of $H_k$. Their values can be negative, or even complex (outliers coming as a pair). If the smallest Ritz values are indeed related to the spikes, then deflating the associated vectors could participate in improving the convergence of ROPM. We leave this for future research. The authors of~\cite[Section 5.1]{guttel} also note a less regular convergence for sketched FOM compared to sketched GMRES, and relate this to the Ritz values being close to quadrature nodes used to compute an integral arising in their sketched FOM approximation. However the matrices in our test set, including the generated matrices, allow to showcase spikes of large magnitude.

\begin{figure}
\begin{subfigure}[t]{.33\textwidth}
    \centering \includegraphics[width=\linewidth]{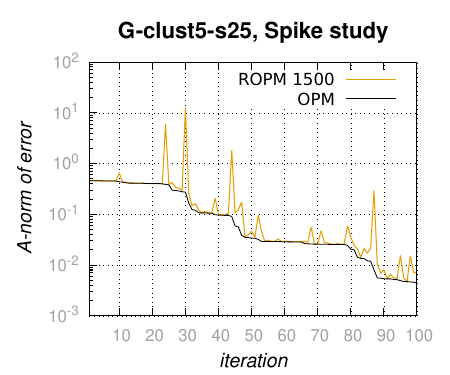}
    \caption{Large spike at iteration 30}
    \label{fig:RFOM_spikezoom}
\end{subfigure}
\begin{subfigure}[t]{.33\textwidth}
    \centering \includegraphics[width=\linewidth]{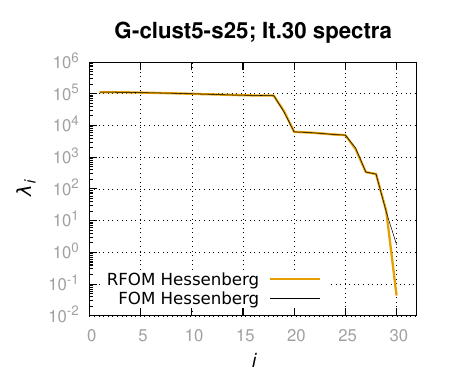}
    \caption{Spectra at spike's iteration 30}  \label{fig:RFOM_it30}
\end{subfigure}
\begin{subfigure}[t]{.33\textwidth}
    \centering \includegraphics[width=\linewidth]{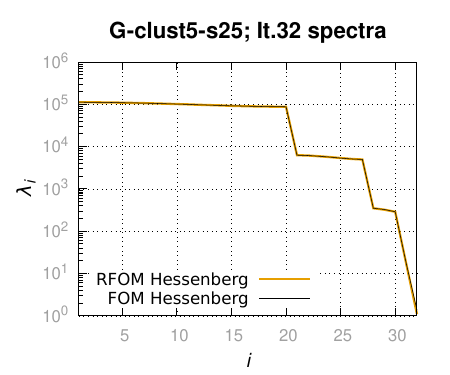}
    \caption{Spectra after spike iteration 32}
    \label{fig:RFOM_it32}
\end{subfigure}
\medskip
\hspace{5.95cm} \begin{subfigure}[t]{.33\textwidth}
    \centering \includegraphics[width=\linewidth]{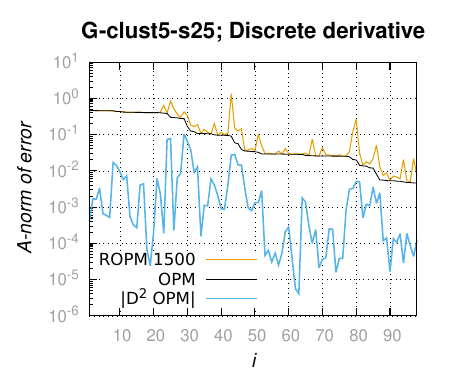}
    \caption{Discrete second derivative of OPM error ($D^2$OPM)}
    \label{fig:RFOM_derivatives}
\end{subfigure}
\caption{Spike study}
\label{fig:RFOM_spikes}
\end{figure}

\subsection{Approximation of CG}
We now discuss the convergence of approximated through randomization CG algorithm, arCG, presented in~\Cref{algo:arCG}.
We consider first two matrices arising from the discretization of PDEs.
The first matrix arises from shallow-water modelling and the convergence of arCG is displayed in \Cref{fig:arcg_shallow}. We observe that for this matrix, whose condition number is very small, $3.3$, arCG converge as fast as CG, with an A-norm error that reaches monotonically $10^{-12}$.
The second matrix arises from temperature modeling of a steel cylinder. Its condition number is $64$, and the convergence of arCG remains comparable to that of CG.

To further investigate the conditions under which arCG could be adequate, we generate four matrices with different spectral profiles.
The convergence results are presented in~\Cref{fig:arcg_exp2} to \Cref{fig:arcg_clust3}.  Matrix \textit{G-exp2}'s spectrum is set such that the eigenvalues decrease smoothly in an exponential manner as $\forall \; \; 1 \leq i \leq n, \; \; \lambda_i = \lambda_{\text{max}} \cdot \left({\lambda_{\text{min}}}/{\lambda_{\text{max}}}\right)^{\frac{i}{n-1}}$. 
Both matrices \textit{G-exp2} and \textit{G-exp3} have their minimum eigenvalue set at $\lambda_{\text{min}} = 1$. For the first matrix \textit{G-exp2}, we set $\lambda_{\text{max}} = 10^2$. For the second matrix \textit{G-exp3}, we set $\lambda_{\text{max}} = 10^3$. We see that the rate of convergence of~\Cref{algo:arCG} is comparable to that of CG for both matrices,~\Cref{fig:arcg_exp2,fig:arcg_exp3}, even if their condition number is different by one order of magnitude. For both examples, the sampling size is set to be $10$ times the number of iterations that CG took to converge. Setting the maximum eigenvalue to $10^4$ or $10^5$ gave similar results. Similar results are obtained when the eigenvalues decay linearly.

However, it is possible to slow down arCG considerably by considering instead a clustered spectrum, and then increase the condition number. We choose a second spectral profile, formed by two clusters of equal size. The gap between the two clusters is proportional to the condition number of the matrix. We set the radius of the clusters as $0.25$ times the center. Both matrices \textit{G-clust2} and \textit{G-clust3} have their cluster of small eigenvalues centered at $1$. Matrix \textit{G-clust2} has its cluster of largest eigenvalues set at $10^2$, and \textit{G-clust3} at $10^3$. We see now that increasing the condition number of the matrix slows down the rate of convergence of arCG. In the last plot, we also see the error loose its monotony. On \textit{G-clust2}, the sampling size is set to be $10$ times the number of iterations required by CG to converge, and arCG took more than $90$ iterations to converge, while CG took $37$ iterations. For matrix \textit{G-clust3}, this sampling size is not sufficient, and the error diverges. We then set the sampling size at $100$ times the number of iterations required by CG to converge. Now arCG converges again, but the error is not monotonous anymore, and arCG takes more than $640$ iterations to converge.

In summary, the condition number alone of the system does not predict the convergence of arCG when compared to CG. We experimentally identified two spectrum profiles (exponential and linear decay) for which arCG's convergence was comparable to that of CG for condition numbers up to $10^5$. We identified a spectrum profile (clustered) for which arCG's performance was much worse than that of CG as the condition number increases.

\begin{figure}[H]
  \begin{subfigure}[t]{.33\textwidth}
    \centering
    \includegraphics[width=\linewidth]{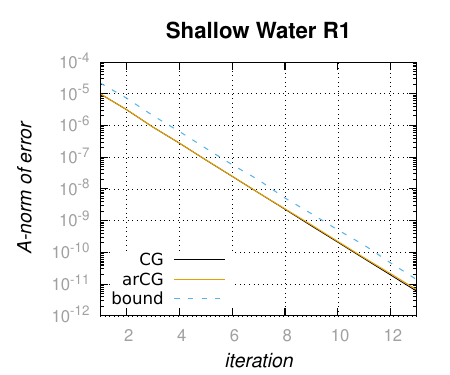}
    \caption{Typical application}\label{fig:arcg_shallow}
  \end{subfigure}
  \begin{subfigure}[t]{.33\textwidth}
    \centering
    \includegraphics[width=\linewidth]{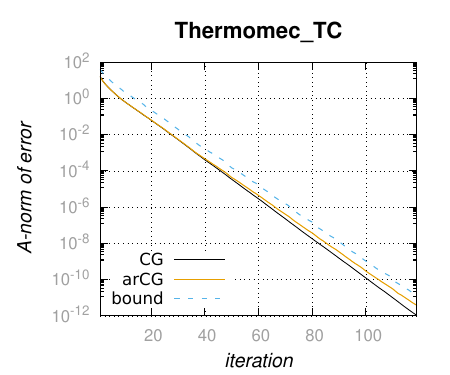}
    \caption{Harder problem}\label{fig:arcg_thermomec}
  \end{subfigure}
  \begin{subfigure}[t]{.33\textwidth}
    \centering
    \includegraphics[width=\linewidth]{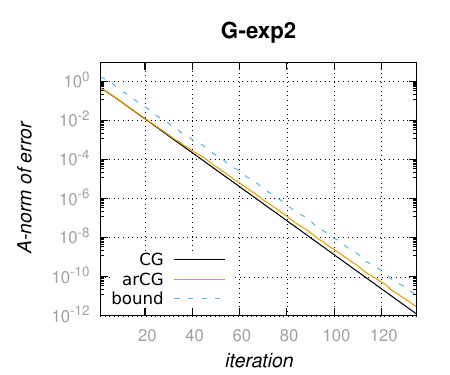}
    \caption{Well-conditioned, smooth spectral decay}\label{fig:arcg_exp2}
  \end{subfigure}
  \medskip
  \begin{subfigure}[t]{.33\textwidth}
    \centering
    \includegraphics[width=\linewidth]{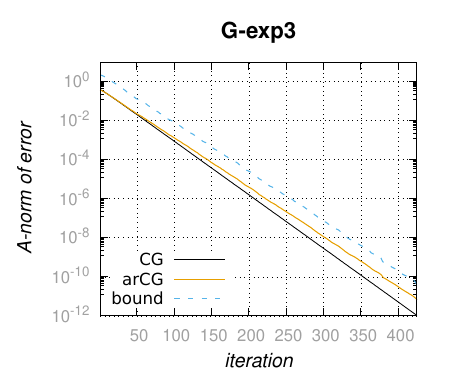}
    \caption{Ill-conditioned, smooth spectral decay}\label{fig:arcg_exp3}
  \end{subfigure}
  \begin{subfigure}[t]{.33\textwidth}
    \centering
    \includegraphics[width=\linewidth]{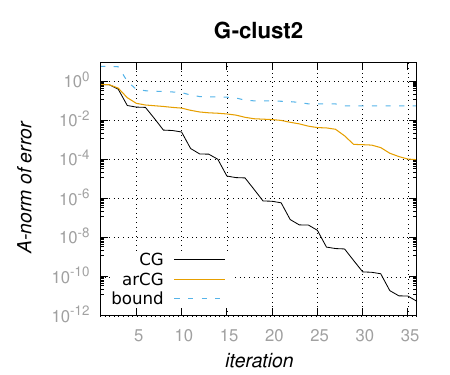}
    \caption{Well-conditioned, clustered spectral decay}\label{fig:arcg_clust2}
  \end{subfigure}
  \begin{subfigure}[t]{.33\textwidth}
    \centering
    \includegraphics[width=\linewidth]{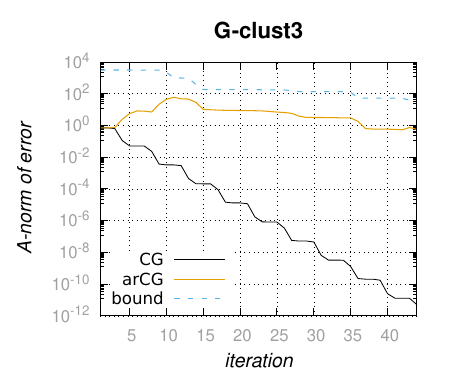}
    \caption{Ill-conditioned, clustered spectral decay}\label{fig:arcg_clust3}
  \end{subfigure}
  \caption{Convergence of arCG}
\end{figure}\label{fig:arCG}

\section{Acknowledgements}
This project has received funding from the European Research
Council (ERC) under the European Union’s Horizon 2020 research and innovation program (grant agreement No 810367).

\printbibliography

\end{document}